\numberwithin{equation}{section}
\newtheorem{theorem}[equation]{Theorem}
\newtheorem{corollary}[equation]{Corollary}
\newtheorem{lemma}[equation]{Lemma}
\newtheorem{proposition}[equation]{Proposition}
\theoremstyle{definition}
\newtheorem{definition}[equation]{Definition}
\theoremstyle{remark}
\newtheorem{remark}[equation]{Remark}
\newtheorem{example}[equation]{Example}
\newcommand{\R}{\mathbb{R}}
\newcommand{\Z}{\mathbb{Z}}
\newcommand{\N}{\mathbb{N}}
\newcommand{\DD}{\mathbb{D}}
\newcommand{\EE}{\mathbb{E}}
\newcommand{\CC}{\mathbb{C}}
\newcommand{\dif}{\mathrm{d}}
\newcommand{\sparse}{\mathcal{S}}
\newcommand{\cD}{\mathcal{D}}
\newcommand{\calD}{\mathcal{D}}
\newcommand{\calDp}{\mathcal{D}^{2}_{\subseteq}}
\newcommand{\calQ}{\mathcal{Q}}
\newcommand{\calS}{\mathcal{S}}
\DeclarePairedDelimiter\abs{\lvert}{\rvert}
\DeclarePairedDelimiter\meas{\lvert}{\rvert}
\DeclarePairedDelimiter\norm{\lVert}{\rVert}
\DeclarePairedDelimiter\Set\{\}
\DeclarePairedDelimiter\ceil{\lceil}{\rceil}
\def\<{\left\langle}
\def\>{\right\rangle}
\newcommand{\diam}{\mathrm{diam}}
\newcommand{\dist}{\mathrm{dist}}
\newcommand{\V}[3][r]{V^{#1}( #2 : #3)}
\newcommand{\hV}[3][r]{\dot{V}^{#1}( #2 : #3)}
\newcommand{\jump}[3][\lambda]{\mathcal{J}_{#1}(#2 : #3)}
\newcommand{\scale}{k}
\newcommand{\one}{\mathbf{1}}
\newcommand{\Nop}{\mathcal{N}}
\newcommand{\Bop}{\mathcal{B}}
\newcommand{\mean}[2][Q]{\<#2\>_{#1}}
\newcommand*{\supp}{\mathrm{supp}}
\def\clap#1{\hbox to 0pt{\hss#1\hss}}
\newcommand*{\Cref}[1]{C_{\eqref{#1}}}
\begin{document}

\title{Variational truncations of singular integrals on spaces of homogeneous type}
\author{Pavel Zorin-Kranich}
\address{University of Bonn\\ Mathematical Institute}
\maketitle
\begin{abstract}
We prove sharp weighted estimates for $r$-variations of averages and truncated singular integrals on suitable spaces of homogeneous type, including homogeneous nilpotent Lie groups.
\end{abstract}

\section{Introduction}
Throughout the article $(X,\rho,\mu)$ denotes an (Ahlfors--David) $D$-regular space of homogeneous type (definitions of these and other terms are given in Section~\ref{sec:notation}).
Variational estimates for the averaging operators 
\begin{equation}
\label{eq:At}
A_{t}f(x) = \mu(B(x,t))^{-1} \int_{B(x,t)} f(y) \dif\mu(y)
\end{equation}
on $X=\R^{D}$ have been introduced by Bourgain \cite{MR1019960}.
A comprehensive theory covering the full range of $L^{p}$ spaces and variational exponents $r$ both for averages \eqref{eq:At} and truncated singular integrals 
\begin{equation}
\label{eq:Tt}
T_{t}f(x) = \int_{\rho(x,y)>t} K(x,y) f(y) \dif\mu(y)
\end{equation}
has been developed by a number of authors \cite{MR1996394,MR1645330,MR2434308}.
Some of their estimates have been extended to weighted $L^{p}$ spaces in \cite{MR3283159,arxiv:1409.7120,arXiv:1511.05129}.

Sparse domination has been developed in \cite{MR3085756,arXiv:1501.05818,MR3625128} in order to simplify the proof of the $A_{2}$ theorem for Calder\'on--Zygmund (CZ) operators \cite{MR2912709}.
In a short period of time since 2015 this idea has been applied in many settings which go beyond CZ theory, and we are not going to survey these developments.
In the CZ setting it is by now well understood that sparse domination follows from suitable localized non-tangentional endpoint estimates; several abstract results formalizing this principle appeared in \cite{MR3484688,arXiv:1604.05506,arxiv:1612.09201}.
These techniques have been applied to $r$-variational estimates for truncated singular integrals in \cite{MR3065022} (smooth truncations) and \cite{arXiv:1604.05506} (sharp truncations).

In this article we extend these $r$-variational estimates to a class of non-convolution type singular integrals.
We formulate our results on classes of spaces of homogeneous type that include homogeneous nilpotent Lie groups.
In this setting we also obtain some sharp weighted inequalities for square functions and $r$-variation of averages.

Weighted estimates for $r$-variation of averaging operators \eqref{eq:At} have been obtained in \cite{arxiv:1409.7120}.
While in retrospect the methods of that article easily imply sparse domination of $r$-variations, the endpoint estimate for the jump counting function below requires a new ingredient.
\begin{theorem}
\label{thm:av}
Suppose that the space $X$ has the small boundary property.
Then the operators $f\mapsto \lambda^{-1} \sqrt{\jump{A_{t}f}{0<t<\infty}}$ are pointwise controlled by sparse operators uniformly in $\lambda>0$.
Moreover, for every $r>2$ the operator $f\mapsto \V{A_{t}f}{0<t<\infty}$ is pointwise controlled by sparse operators with constant $C_{X} r/(r-2)$.
\end{theorem}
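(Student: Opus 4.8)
The plan is to deduce both statements from the abstract sparse domination principle for variational operators developed in \cite{arXiv:1604.05506,arxiv:1612.09201,MR3484688}. It says that a subadditive operator $\mathcal T$ --- here $f\mapsto\lambda^{-1}\sqrt{\jump{A_{t}f}{0<t<\infty}}$ with $\lambda>0$ fixed, and $f\mapsto\V{A_{t}f}{0<t<\infty}$ --- is pointwise dominated by sparse operators $\sum_{Q\in\sparse}\mean[Q]{\abs{f}}\one_{Q}$ once one establishes (i) a weak-type $(1,1)$ bound for $\mathcal T$ on $X$, and (ii) a weak-type $(1,1)$ bound for the associated localized maximal variant of $\mathcal T$ (formed, for each ball $B$, from the values of $\mathcal T(f\one_{X\setminus 3B})$ on subballs of $B$). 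The sparse family is then produced by the usual Calderón--Zygmund stopping time (inside the current ball, pass to the maximal subballs on which the Hardy--Littlewood average of $\abs f$ exceeds a fixed large multiple of its average over the current ball, and iterate), and the sparse bound comes out of size comparable to the two weak-$(1,1)$ norms, so that the claimed constant $C_{X}r/(r-2)$ for the second operator will follow once the corresponding weak-$(1,1)$ norms in (i) and (ii) are shown to be $\lesssim r/(r-2)$. The feature that makes (ii) tractable for the averaging operators is that $A_{t}(f\one_{X\setminus 3B})(x)=0$ whenever $x\in B$ and $t\le 2\,\mathrm{rad}(B)$, so only the scales $t\gtrsim\mathrm{rad}(B)$ enter.

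On those scales the small boundary property, in the quantitative form $\mu(\{y:\abs{\rho(x,y)-t}<\epsilon t\})\lesssim\epsilon^{\eta}\mu(B(x,t))$, plays the role that kernel regularity plays in Calderón--Zygmund theory for the non-smooth truncations $A_{t}$: for $x,x'\in B$ and $t>2\,\mathrm{rad}(B)$ it forces the difference $A_{t}(f\one_{X\setminus 3B})(x)-A_{t}(f\one_{X\setminus 3B})(x')$ to be small, and, more importantly, to have $r$-variation (respectively $\lambda$-jump count) in $t$ that is summable over the dyadic scale blocks. Summing these geometric series yields the required bound on the maximal variant by the Hardy--Littlewood maximal function of $f$, and hence (ii); tracking the $r$-dependence through these steps --- through the classical passages between $\ell^{2}$ and $\ell^{r}$ norms and the Lépingle-type inequality, where the loss is $\lesssim r/(r-2)$ --- gives the claimed constant. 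Estimates of this kind are already available from the methods of \cite{arxiv:1409.7120}, so (ii) presents no essential difficulty.

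It remains to prove (i), the global weak-type $(1,1)$ bounds, which I would obtain from the standard long/short splitting
\[
\V{A_{t}f}{0<t<\infty}\lesssim\V{A_{2^{j}}f}{j\in\Z}+\Bigl(\sum_{j\in\Z}\V{A_{t}f}{2^{j}\le t\le 2^{j+1}}^{2}\Bigr)^{1/2}
\]
and its analogue for the jump counting function, namely the jumps of the dyadic skeleton $(A_{2^{j}}f)_{j\in\Z}$ together with an $\ell^{2}$-sum over $j$ of the short jump counts $\jump{A_{t}f}{2^{j}\le t\le 2^{j+1}}$. The long term is controlled --- since $r>2$ in the first case, and by Cauchy--Schwarz in the second --- by the dyadic square function $\bigl(\sum_{j}\abs{A_{2^{j+1}}f-A_{2^{j}}f}^{2}\bigr)^{1/2}$, whose weak-$(1,1)$ boundedness on $X$ is a routine Calderón--Zygmund square function estimate. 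For $r>2$ the short $r$-variations are estimated by a square function whose $L^{2}$ bound comes from an almost-orthogonality argument in which the small boundary property replaces the smoothness of $\partial_{t}A_{t}$ used on $\R^{D}$, the weak-$(1,1)$ bound then following by a Calderón--Zygmund decomposition, with the factor $r/(r-2)$ reappearing in the $\ell^{2}$-to-$\ell^{r}$ step. (Alternatively, once the jump estimate below is in hand, the $r$-variation bound with $r>2$ follows from it through the elementary inequality $\V{a}{I}^{r}\lesssim\sum_{k\in\Z}2^{kr}\jump[2^{k}]{a}{I}$ combined with $\sup_{t>0}\abs{A_{t}f}\le\cM f$, where $\cM$ is the Hardy--Littlewood maximal operator, at the cost of a factor $\lesssim r/(r-2)$.)

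The main obstacle, and the point where the paper's announced new ingredient must be invoked, is the endpoint for the \emph{short} jump counting function: the weak-type $(1,1)$ inequality, uniform in $\lambda>0$, for
\[
f\longmapsto\Bigl(\sum_{j\in\Z}\lambda^{-2}\,\jump{A_{t}f}{2^{j}\le t\le 2^{j+1}}\Bigr)^{1/2}.
\]
On $\R^{D}$ this rests on the smoothness of $t\mapsto A_{t}f$, which has no counterpart on a general space of homogeneous type, so the small boundary property must again do the work. The route I would take is to compare, at each point $x$ and on each dyadic block $2^{j}\le t\le 2^{j+1}$, the continuous family $(A_{t}f)$ with a Christ-type dyadic martingale $(\EE_{\cF}f)$ adapted to $x$ through the finite collection of adjacent dyadic systems on $X$; the discrepancy is an error whose variation in $t$ is controlled pointwise by the relative $\mu$-measures of the symmetric differences between $B(x,t)$ and the dyadic cube of comparable size containing $x$, which are boundary-type sets and hence small. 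The martingale is then handled by the weak-$(1,1)$ form of the Pisier--Xu jump inequality, the errors are summed over scales using the small boundary estimate, and a Vitali-type covering argument turns this pointwise control into the weak-$(1,1)$ bound. I expect the genuinely delicate point --- and the crux of the whole theorem --- to be making the error analysis uniform in $\lambda$ at the $L^{1}$ endpoint, as opposed to in $L^{p}$ for $p>1$ where the martingale comparison is considerably softer.
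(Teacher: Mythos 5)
The high-level long/short splitting, the role of the small boundary property in the short-variation square function, and the martingale comparison are all in line with the paper's strategy. But there are two concrete gaps in the proposal, both connected to what the introduction flags as the needed ``new ingredient.''

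First, you treat $f\mapsto\lambda^{-1}\sqrt{\jump{A_tf}{t>0}}$ as a subadditive operator that can be fed straight into an abstract sparse-domination principle. It is not subadditive (the paper points this out explicitly in the closing remark of Section~5), and the local sparse-domination lemma here (Lemma~\ref{lem:sparse-domination}) requires monotonicity and $\ell^{r}$-subadditivity of the function on order intervals $[Q',Q]$. The crux of Proposition~\ref{prop:av-jump} is the \emph{construction} of a subadditive majorant $F_{\lambda}$ for $\lambda\sqrt{\jump{\cdot}{\cdot}}$, compatible with the non-tangential supremum $\Nop_{Q}$, via the greedy L\'epingle stopping times on the fixed Christ filtration, followed by a direct weak-$(1,1)$ argument for $f\mapsto\Nop F'_{\lambda}$. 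This step, not the uniformity in $\lambda$ per se, is the real obstacle your proposal does not address. (Relatedly, the paper compares against a single fixed Christ filtration with the non-tangential supremum already built into $S_k$; you propose adjacent dyadic systems, which is a different and more complicated route, and you would still need a subadditive majorant for $\Nop_Q$ at the end of it.)

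Second, the deduction of the $r$-variation sparse bound with constant $C_X r/(r-2)$ from the $\lambda$-jump bounds is not just an ``$\ell^{2}$-to-$\ell^{r}$ step''; one must sum local weak-$(1,1)$ estimates for $F_{2^{-l}}$ over $l$, and the weak-$L^{1,\infty}$ quasi-norm is not subadditive, so that sum does not close on its own. The paper resolves this with Lemma~\ref{lem:weakLp<1}, a weak-$L^{p}$, $p<1$, quasi-triangle inequality with controlled constant, applied with $p=1/2$ to the squares of $\Nop_Q F_{2^{-l}}$ in Corollary~\ref{cor:av-Vr}. Your alternative route --- first establish sparse domination for each $\lambda$ separately, then sum --- also does not work as stated, since it would yield a different sparse family $\sparse^{(\lambda)}$ for each threshold rather than a single sparse bound for the variation. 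The correct order of operations is to combine the subadditive majorants $F_\lambda$ at the level of local weak-$(1,1)$ estimates (via Lemma~\ref{lem:weakLp<1}) and only then apply Lemma~\ref{lem:sparse-domination} once.
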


Known estimates for sparse operators (see \cite{MR3778152} for $p>1$ and \cite{MR3455749} for $p=1$) yield the following consequences for the jump counting function (and similar ones for $r$-variations).
\begin{corollary}
\label{cor:av}
With the hypotheses of Theorem~\ref{thm:av}
\begin{align}
\label{eq:av:ap}
\norm{\lambda^{-1} \sqrt{\jump{A_{t}(f\sigma)}{t>0}}}_{L^{p}(w)}
&\lesssim_{X}
[w,\sigma]_{A_{p}}^{1/p} ([w]_{A_{\infty}}^{1/p'} + [\sigma]_{A_{\infty}}^{1/p}) \norm{f}_{L^{p}(\sigma)},
& 1<p<\infty,\\
\label{eq:av:weak-ap}
\norm{\lambda^{-1} \sqrt{\jump{A_{t}(f\sigma)}{t>0}}}_{L^{p,\infty}(w)}
&\lesssim_{X}
[w,\sigma]_{A_{p}}^{1/p} [w]_{A_{\infty}}^{1/p'} \norm{f}_{L^{p}(\sigma)},
& 1<p<\infty,\\
\label{eq:av:a1}
\norm{\lambda^{-1} \sqrt{\jump{A_{t}(f\sigma)}{t>0}}}_{L^{1,\infty}(w)}
&\lesssim_{X}
\frac{1}{\alpha-1} \int_{X} \abs{f} M_{L \log\log L (\log\log\log L)^{\alpha}} w,
& 1<\alpha<2.
\end{align}
\end{corollary}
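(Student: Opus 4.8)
The plan is to read off all three inequalities from the pointwise sparse domination furnished by Theorem~\ref{thm:av}, by feeding it into the known sharp weighted bounds for sparse operators in their two-weight (Sawyer-type) form. Fix $\lambda>0$, weights $w$ and $\sigma$, and a function $f$. Applying Theorem~\ref{thm:av} with $f\sigma$ in place of $f$ produces a sparse family $\sparse$, depending on $\lambda$, $f$ and $\sigma$, such that
\[
\lambda^{-1}\sqrt{\jump{A_{t}(f\sigma)}{t>0}}(x)
\le
C_{X}\sum_{Q\in\sparse}\mean[Q]{\abs{f}\sigma}\one_{Q}(x)
\qquad\text{for a.e.\ } x\in X.
\]
The sparse operators occurring here are of the standard $L^{1}$-averaging type --- as is confirmed by the weight characteristics on the right-hand sides of \eqref{eq:av:ap}--\eqref{eq:av:a1}, which involve $[w,\sigma]_{A_{p}}$ and not $[w,\sigma]_{A_{p/s}}$ for some $s>1$ --- so it remains only to bound the $L^{p}(w)$, $L^{p,\infty}(w)$ and $L^{1,\infty}(w)$ norms of $\sum_{Q\in\sparse}\mean[Q]{\abs{f}\sigma}\one_{Q}$ by the asserted quantities.

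For $1<p<\infty$ this is exactly the quantitative mixed $A_{p}$--$A_{\infty}$ estimate for sparse operators, together with its weak-type companion, recorded in \cite{MR3778152}; these give \eqref{eq:av:ap} and \eqref{eq:av:weak-ap} respectively. For $p=1$ the required input is the refined weak-type endpoint of \cite{MR3455749}, which controls the weak $L^{1}(w)$ norm of the sparse operator by $(\alpha-1)^{-1}\int_{X}\abs{f}\,M_{L\log\log L(\log\log\log L)^{\alpha}}w$ for $1<\alpha<2$; inserted into the display above it yields \eqref{eq:av:a1}. In each case the implicit constant depends only on $X$ (through $C_{X}$ and the fixed sparseness parameter) and on the indicated weight characteristics, and in particular not on $\lambda$ nor on the chosen family $\sparse$; hence the inequalities hold uniformly in $\lambda>0$, as stated.

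Only two bookkeeping points need a remark. First, the weighted bounds for sparse operators used above are purely combinatorial --- proved by Carleson-embedding arguments --- and hold for any sparse family of dyadic-type sets in a space of homogeneous type, so their application in the setting of Theorem~\ref{thm:av} needs nothing beyond the doubling of $\mu$ and the existence of a dyadic (Hyt\"onen--Kairema) system on $X$. Second, the dominating family $\sparse$ genuinely depends on $\lambda$ through the jump counting function, which is harmless precisely because those bounds are uniform over all sparse families. I therefore do not expect any substantive obstacle here: Corollary~\ref{cor:av} is a direct transcription of Theorem~\ref{thm:av} through the weighted theory of sparse operators, the one deep external ingredient being the endpoint estimate behind \eqref{eq:av:a1}, namely \cite{MR3455749}.
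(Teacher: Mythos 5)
Your proposal is correct and matches the paper's (implicit) argument exactly: the paper derives Corollary~\ref{cor:av} by feeding the pointwise sparse domination of Theorem~\ref{thm:av} into the sharp weighted bounds for sparse operators from \cite{MR3778152} ($p>1$) and \cite{MR3455749} ($p=1$), just as you do. The only cosmetic points are that the paper's sparse operators carry averages over $CQ$ rather than $Q$ (reduced to the standard form via adjacent dyadic grids, as noted in Section~\ref{sec:notation}) and that the domination is stated as a $\liminf$ over sparse families, both of which are harmless for the norm estimates.
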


Finally, we obtain $r$-variational estimates for a class of non-convolution type singular integral operators.

\begin{theorem}
\label{thm:sing}
Suppose that the space $X$ has the small boundary property and the metric $\rho$ satisfies the H\"older type condition
\begin{equation}
\label{eq:rho-holder}
\abs{\rho(x,z)-\rho(y,z)} \leq C \max (\rho(x,z),\rho(y,z))^{1-\eta} \rho(x,y)^{\eta}
\end{equation}
for some $0<\eta\leq 1$.

Let $K : X\times X \setminus \Set{\mathrm{diagonal}} \to \CC$ be a H\"older continuous CZ kernel, that is, a function that satisfies the size condition
\begin{equation}
\label{eq:size}
\sup_{x\neq y} \rho(x,y)^{D} \abs{K(x,y)} \leq 1,
\end{equation}
the smoothness estimate
\begin{equation}
\label{eq:smoothness}
\abs{K(x,y)-K(x',y)} + \abs{K(y,x)-K(y,x')}
\leq
\big( \frac{\rho(x,x')}{\rho(x,y)} \big)^{\eta} \rho(x,y)^{-D}
\end{equation}
for all $x,y$ with $\rho(x,y)\geq 2\rho(x,x')>0$, and the cancellation condition
\begin{equation}
\label{eq:cancel}
\int_{r<\rho(x,y)<R} K(x,y) \dif x = \int_{r<\rho(x,y)<R} K(x,y) \dif y = 0,
\quad 0<r<R<\infty.
\end{equation}
Consider the associated truncated singular integral operators \eqref{eq:Tt}.

Then for every $r>2$ the operator $f\mapsto \V{T_{t}f}{0<t<\infty}$ is pointwise controlled by sparse operators with norm $C_{X,\eta}r/(r-2)$.
\end{theorem}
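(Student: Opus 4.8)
The plan is to follow the now-standard route: reduce the pointwise sparse domination of the variation operator to a *localized grand maximal truncation* estimate, and then prove that estimate by splitting the singular integral into a "kernel-close" part handled by the size and smoothness conditions and a "cancellation" part handled by an oscillation/variational square-function bound. Concretely, fix a ball $Q$; for $x\in Q$ one wants to compare $\V{T_t f}{0<t<\infty}$ with the analogous variation of $T_t(f\one_{3Q})$ together with a maximal term $\sup_{t}\abs{T_t(f\one_{X\setminus 3Q})(x) - (\text{value at a reference point})}$. The standard abstract machinery (as in \cite{arXiv:1604.05506,MR3484688,arxiv:1612.09201}, adapted to spaces of homogeneous type) then produces a sparse bound provided one verifies two ingredients: (i) a weak-type $(1,1)$ bound for the variation operator $f\mapsto \V{T_t f}{0<t<\infty}$ itself, and (ii) a weak-type $(1,1)$ bound for the grand maximal truncation $\Nop$ that measures the non-tangential oscillation of $T_t f$ away from the support of the local piece of $f$.

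For ingredient (i) I would first split $V^r$ into a short-variation part and a long-variation part at dyadic scales $t\in[2^\ell,2^{\ell+1})$. The long-variation part is controlled by a jump inequality / square function $\big(\sum_\ell \abs{T_{2^{\ell+1}}f - T_{2^\ell}f}^2\big)^{1/2}$, which is itself a vector-valued CZ operator: its kernel $\sum_\ell (\one_{\rho(x,y)>2^\ell} - \one_{\rho(x,y)>2^{\ell+1}})K(x,y)$ inherits size and Hölder smoothness from \eqref{eq:size}--\eqref{eq:smoothness}, here using the metric Hölder condition \eqref{eq:rho-holder} to control the error when the annulus $\{2^\ell<\rho(\cdot,y)<2^{\ell+1}\}$ is perturbed in the first variable; $L^2$-boundedness comes from a Cotlar-type almost-orthogonality argument using the cancellation \eqref{eq:cancel}. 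The short-variation part $\sum_\ell \V{T_t f}{2^\ell\le t<2^{\ell+1}}$ is estimated termwise: within one dyadic block one writes $T_t f - T_{2^\ell} f = \int_{t<\rho(x,y)\le 2^\ell}K(x,y)f(y)\dif\mu(y)$ and bounds its $r$-variation (indeed its $\rho$-variation for any $\rho\ge1$, at the cost of the $r/(r-2)$ factor coming from $r>2$ only through summing the blocks) by a single average-type operator, giving the crucial constant $C r/(r-2)$. This reduces (i) to weak-$(1,1)$ of a CZ-type operator and of the Hardy--Littlewood maximal function on $X$, both available since $X$ is a space of homogeneous type.

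Ingredient (ii), the grand maximal truncation estimate, is the heart of the matter and the main obstacle. One must show that for $x\in Q$ and $f$ supported outside $3Q$,
\[
\Nop_{Q} f(x) := \V{T_t f}{0<t<\infty} \cdot \one_{x\in Q} + \sup_{x'\in Q}\ \sup_{t>0}\ \abs{T_t f(x) - T_t f(x')}
\]
is of weak type $(1,1)$ with the expected $r/(r-2)$ dependence. The non-tangential maximal piece is handled by the smoothness \eqref{eq:smoothness}: when $\rho(x,y)\ge 2\ell(Q)$ the difference $\abs{K(x,y)-K(x',y)}$ is $\lesssim (\ell(Q)/\rho(x,y))^\eta\rho(x,y)^{-D}$, which sums (using $D$-regularity and $\eta>0$) to a constant times the maximal function of $f$; the constraint $\rho(x,y)<t$ versus $\rho(x',y)<t$ at the boundary of the truncation region is absorbed using \eqref{eq:rho-holder}, exactly as in the long-variation estimate. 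The variation piece on $X\setminus 3Q$ then reduces to the already-established bound (i) applied to $f\one_{X\setminus 3Q}$, minus the easily-controlled tail. Once (i) and (ii) are in place, the general sparse-domination lemma for spaces of homogeneous type gives pointwise domination of $\V{T_t f}{0<t<\infty}$ by a sparse operator, and tracking constants through the short-variation summation yields the stated norm $C_{X,\eta}\, r/(r-2)$.

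The main technical difficulty I anticipate is the interplay between the \emph{sharp} truncations (which introduce indicator functions of balls whose boundaries move nonsmoothly) and the non-convolution structure of $K$: one cannot simply freeze the center of the truncating ball, so every smoothness estimate must be paired with the metric regularity \eqref{eq:rho-holder} to handle the annular error terms, and one must be careful that these error terms, summed over all dyadic scales, still produce only a constant (not a logarithmically divergent) multiple of the maximal function — this is precisely where the hypothesis $\eta>0$ in both \eqref{eq:smoothness} and \eqref{eq:rho-holder} is used. A secondary point is verifying the small-boundary property is only needed for the averaging building blocks (via Theorem~\ref{thm:av}) that appear when one estimates the short-variation pieces, so its role here is inherited rather than direct.
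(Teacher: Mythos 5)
Your overall architecture (localize, prove a local weak $(1,1)$ bound for a non-tangential variational operator, run a stopping-time argument to get sparse domination) matches the paper. But there is a genuine gap in the treatment of the long variation, and a related misattribution of the $r/(r-2)$ constant. You claim the long-variation part at dyadic scales "is controlled by a jump inequality / square function $\big(\sum_\ell \abs{T_{2^{\ell+1}}f - T_{2^\ell}f}^2\big)^{1/2}$." A square function of consecutive dyadic differences does \emph{not} bound the $r$-variation (nor the $\lambda$-jump counting function) over those scales: variation is a supremum over all refinements and cannot be reduced to one particular telescoping square sum. This is precisely where L\'epingle's inequality must enter. The paper's route is to compare the smoothly-truncated dyadic sums $\sum_{k\ge k_0} T_k f$ with a martingale via the Duoandikoetxea--Rubio de Francia splitting $\sum_{k\ge k_0}T_k = \sum_{l\ge k_0}\DD_l T + (\text{square-function errors})$; the principal term $\hV{\sum_{l\ge k_0}\DD_l T f}{k_0\in\Z}$ is a genuine martingale $r$-variation, controlled by L\'epingle with constant $\lesssim r/(r-2)$, and it is \emph{here}, not in the short variations, that the constant arises. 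Your proposal has this backwards: you say "giving the crucial constant $Cr/(r-2)$" from summing short-variation blocks, but the short variations contribute an $\ell^2$-sum of per-block $2$-variations with no $r$-dependence at all. The error terms $II$ and $III$ in the martingale comparison are then genuine square functions (handled by Cotlar--Stein almost-orthogonality and, for the non-tangential piece $III$, by an intrinsic square function estimate), and this last ingredient is absent from your outline.

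Two smaller points. First, without the martingale comparison there is no obvious object to apply L\'epingle to, so the "vector-valued CZ operator" framing alone cannot close the argument; you would need to produce a substitute for the martingale filtration (which, on a general space of homogeneous type, is exactly what the Christ dyadic cubes and $\DD_l$ provide). Second, the small boundary property is not "inherited via Theorem~\ref{thm:av}": in the paper it is used directly in the short-variation estimates for singular integrals (the reverse H\"older inequality of Lemma~\ref{lem:reverse-holder-single-scale} and the $L^2$ bound of Lemma~\ref{lem:short-variation-single-scale}, both via \eqref{eq:short-var-sing}), and again in the proof of the $L^2$ bound for $\DD_{k+m}T_k$ where one exploits that $T_k^*\one_Q$ is supported on a thin boundary layer of $Q$. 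If you try to run your plan as stated, the long-variation estimate will not go through.
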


\begin{corollary}
\label{cor:sing}
With the hypotheses of Theorem~\ref{thm:sing}
\begin{align}
\label{eq:sing:ap}
\norm{\V{T_{t}(f\sigma)}{t>0}}_{L^{p}(w)}
&\lesssim_{X}
\frac{r}{r-2} [w,\sigma]_{A_{p}}^{1/p} ([w]_{A_{\infty}}^{1/p'} + [\sigma]_{A_{\infty}}^{1/p}) \norm{f}_{L^{p}(\sigma)},
& 1<p<\infty,\\
\label{eq:av:weak-ap}
\norm{\V{T_{t}(f\sigma)}{t>0}}_{L^{p,\infty}(w)}
&\lesssim_{X}
\frac{r}{r-2} [w,\sigma]_{A_{p}}^{1/p} [w]_{A_{\infty}}^{1/p'} \norm{f}_{L^{p}(\sigma)},
& 1<p<\infty,\\
\label{eq:av:a1}
\norm{\V{T_{t}(f)}{t>0}}_{L^{1,\infty}(w)}
&\lesssim_{X}
\frac{r}{r-2} \frac{1}{\alpha-1} \int_{X} \abs{f} M_{L \log\log L (\log\log\log L)^{\alpha}} w,
& 1<\alpha<2.
\end{align}
\end{corollary}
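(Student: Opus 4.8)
The plan is to read off all three weighted inequalities from the pointwise sparse domination already supplied by Theorem~\ref{thm:sing}, together with the known weighted bounds for sparse operators; the substantive work has thus been done in Theorem~\ref{thm:sing}, and this corollary is a formal consequence. Concretely, Theorem~\ref{thm:sing} provides, for every $f\in L^{1}_{\mathrm{loc}}(X)$ (in particular for $f\sigma$ with $f$ bounded and compactly supported), a sparse family $\sparse$, depending on $f$, such that $\V{T_{t}f(x)}{0<t<\infty}\le C_{X,\eta}\frac{r}{r-2}\sum_{Q\in\sparse}\mean[3Q]{\abs{f}}\one_{Q}(x)$ for a.e.\ $x$. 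Applying this with $f\sigma$ in place of $f$ gives
\[
\V{T_{t}(f\sigma)(x)}{0<t<\infty}\ \le\ C_{X,\eta}\,\frac{r}{r-2}\sum_{Q\in\sparse}\mean[3Q]{\abs{f}\sigma}\,\one_{Q}(x)\qquad\text{for a.e.\ }x,
\]
first for $f$ bounded with bounded support and then, by density, for general $f\in L^{p}(\sigma)$ (resp.\ $f\in L^{1}$ in the endpoint case). It therefore remains only to bound the sparse operator $f\mapsto\sum_{Q\in\sparse}\mean[3Q]{\abs{f}\sigma}\one_{Q}$ in each of the three required ways \emph{uniformly over all sparse families} $\sparse$; the constant $\frac{r}{r-2}$ then factors through unchanged.

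For the two estimates with $1<p<\infty$ I would invoke the sharp two-weight $A_{p}$--$A_{\infty}$ bound for sparse operators from \cite{MR3778152}, in the strong-type form
\[
\Bigl\|\sum_{Q\in\sparse}\mean[3Q]{\abs{f}\sigma}\one_{Q}\Bigr\|_{L^{p}(w)}\ \lesssim_{X}\ [w,\sigma]_{A_{p}}^{1/p}\bigl([w]_{A_{\infty}}^{1/p'}+[\sigma]_{A_{\infty}}^{1/p}\bigr)\norm{f}_{L^{p}(\sigma)}
\]
and in the matching weak-type form, where only $[w]_{A_{\infty}}$ enters. For the endpoint $p=1$ inequality (in which $\sigma\equiv1$) I would use the $L^{1}$ bound for sparse operators in terms of iterated-logarithm Orlicz maximal functions from \cite{MR3455749},
\[
\Bigl\|\sum_{Q\in\sparse}\mean[3Q]{\abs{f}}\one_{Q}\Bigr\|_{L^{1,\infty}(w)}\ \lesssim\ \frac{1}{\alpha-1}\int_{X}\abs{f}\,M_{L\log\log L(\log\log\log L)^{\alpha}}w,\qquad 1<\alpha<2.
\]
Substituting these quoted bounds into the displayed sparse domination reproduces \eqref{eq:sing:ap} and the two remaining estimates with precisely the asserted dependence on the weight characteristics and on $r$.

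There is no genuine obstacle internal to this corollary; the only point meriting a moment's attention is that the weighted sparse-operator estimates of \cite{MR3778152,MR3455749} have to be used in the present generality --- that is, for sparse families built from the dyadic cube systems on a space of homogeneous type (Christ, Hyt\"onen--Kairema), with the averages $\mean[3Q]{\cdot}$ understood over a fixed dilate of each cube. This causes no difficulty, since those estimates use only the doubling property of $\mu$ and the standard combinatorial properties of sparse families, both available on $X$; alternatively, the required quantitative sparse bounds can be re-derived on $X$ by the usual stopping-time arguments.
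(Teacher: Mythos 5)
Your proposal is correct and follows precisely the route the paper intends: Theorem~\ref{thm:sing} supplies the pointwise sparse domination, and the weighted bounds are imported from \cite{MR3778152} (strong and weak $A_p$--$A_\infty$) and \cite{MR3455749} (the $L^{1,\infty}$ endpoint), with the $r/(r-2)$ factor simply carried through. Two small technicalities you gloss over, which the paper does address elsewhere: the paper's notion of sparse control is via a $\liminf$ over a sequence of sparse families $\mathcal S^{k_0}$ (so one needs a Fatou-type passage, which is routine for $L^p$ and also works for weak-type quasinorms); and the averages in \eqref{eq:sparse-operator} are over the dilates $CQ$ rather than $Q$, which the paper reduces to ordinary dyadic averages over finitely many adjacent Hyt\"onen--Kairema grids (cf.\ \cite[Remark~4.3]{MR3484688}) rather than re-proving the sparse bounds on $X$ directly as you alternatively suggest.
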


It is known \cite[Theorem 2]{MR546295} that for every quasimetric $\rho$ there exist $\alpha>0$ and a metric $\tilde\rho\sim\rho^{\alpha}$ with the property \eqref{eq:rho-holder}.
More practically, the property \eqref{eq:rho-holder} holds for homogeneous metrics on homogeneous nilpotent Lie groups.

The conclusion of Theorem \ref{thm:sing} has been previously known for convolution type CZ kernels on $\R^{D}$ by \cite[Theorem A.1]{arXiv:1512.07523} (unweighted $L^{p}$ estimates), \cite[Theorem B]{MR1953540} (weak type $(1,1)$), and \cite{arXiv:1604.05506} (reduction of weighted to unweighted estimates).

For rough homogeneous kernels on $\R^{D}$ variational estimates have been proved in \cite[Theorem A]{MR1953540} (see also \cite{MR2434308} and \cite{MR3649476}).
Quantiative weighted estimates for rough homogeneous kernels have been recently obtained in \cite{MR3625128,arxiv:1612.09201} and extended to their maximal truncations in \cite{arxiv:1705.07397,arxiv:1706.09064}.
It would be interesting to extend these results to variational truncations and also homogeneous groups as in \cite{MR1757083}.

\section{Notation and tools}
\label{sec:notation}
\subsection{Spaces of homogeneous type}
\begin{definition}
A \emph{quasi-metric} on a set $X$ is a function $\rho : X\times X\to [0,\infty)$ such that $\rho(x,y)=0 \iff x=y$ that is symmetric and satisfies the quasi-triangle inequality
\begin{equation}
\label{qm}
\rho(x,y) \leq A_{0} (\rho(x,z) + \rho(z,y))
\quad\text{for all}\quad
x,y,z\in X
\end{equation}
with some $A_{0}<\infty$ independent of $x,y,z$.

A measure $\mu$ on a quasi-metric space $(X,\rho)$ is called \emph{doubling} if there exists $A_{1}<\infty$ such that
\begin{equation}
\label{db}
\mu(B(x,2r)) \leq A_{1} \mu(B(x,r))
\quad\text{for all}\quad
x\in X,r>0,
\end{equation}
A tuple $(X,\rho,\mu)$ consisting of a set $X$, a quasi-metric $\rho$, and a doubling measure $\mu$ is called a \emph{space of homogeneous type}.

A space of homogeneous type $(X,\rho,\mu)$ is called (Ahlfors--David) \emph{$D$-regular}, $D>0$, if there exist $0<c,C<\infty$ such that for all $x\in X$ and $r>0$ we have
\[
cr^{D} \leq \mu(B(x,r)) \leq Cr^{D}.
\]
A $D$-regular space necessarily has no atoms.
We say that a family $\calD$ of subsets of $X$ has the \emph{small boundary property} if there exist $\eta>0$ and $C_{3}<\infty$ such that for every $Q\in\calD$ and every $0<\tau\leq 1$
\begin{equation}
\label{eq:small-boundary}
\mu(\partial_{\tau \diam(Q)} Q) \leq C_{3} \tau^{\eta} \mu(Q),
\end{equation}
where
\begin{equation}
\label{eq:tau-boundary}
\partial_{\tau}(Q)
=
\Set{x\in Q : \dist(x,X\setminus Q)\leq\tau} \cup \Set{x\in X\setminus Q : \dist(x,Q)\leq\tau}.
\end{equation}
We say that $(X,\rho,\mu)$ has the small boundary property if the collection of all metric balls has the small boundary property.
\end{definition}
We denote the measure of a set $Q$ by $\meas{Q}=\mu(Q)$ and the average of a function $f$ over $Q$ by $\mean{f}=\meas{Q}^{-1} \int_{Q} f \dif\mu$.

\subsection{Dyadic cubes}
Filtrations on spaces of homogeneous type that closely resemble dyadic filtrations on $\R^{D}$ have been first constructed by Christ \cite{MR1096400}.
We recall their properties.
\begin{definition}
Let $(X,\rho,\mu)$ be a space of homogeneous type.
A \emph{system of dyadic cubes} $\calD$ with constants $\kappa>1$, $a_{0}>0$, $C_{1}<\infty$ consists of collections $\calD_{k}$, $k\in\Z$, of open subsets of $X$ such that
and constants $\kappa>1$, $a_{0},\eta>0$, $C_{1},C_{2}<\infty$ with the following properties.
\begin{enumerate}
\item $\forall k\in\Z \quad \mu(X\setminus\cup_{Q\in\calD_{k}} Q)=0$,
\item If $l\geq k$, $Q\in\calD_{l}$, $Q'\in\calD_{k}$, then either $Q'\subseteq Q$ or $Q'\cap Q=\emptyset$,
\item For every $l\geq k$ and $Q'\in\calD_{k}$ there exists a unique $Q\in\calD_{l}$ such that $Q\supseteq Q'$,\
\item $\forall k\in\Z, Q\in\calD_{k} \quad \exists c_{Q}\in X : B(c_{Q},a_{0}\kappa^{k}) \subseteq Q \subseteq B(c_{Q}, C_{1} \kappa^{k})$.
\end{enumerate}
Abusing the notation we write $\scale(Q)=k$ if $Q\in\calD_{k}$, so that each cube remembers its scale although the same cube (as a set) may appear in other $\calD_{l}$.
We use $\calD$ to denote the disjoint union of $\calD_{k}$.

If in addition the collection $\calD$ has the small boundary property \eqref{eq:small-boundary}, then we call $\calD$ a \emph{Christ} system of dyadic cubes.
\end{definition}

\begin{theorem}[\cite{MR1096400}]
\label{thm:christ-cubes}
Every space of homogeneous type admits a system of Christ dyadic cubes.
\end{theorem}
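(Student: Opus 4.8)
The statement is Christ's construction of dyadic cubes \cite{MR1096400}, so the plan is to recall his argument, emphasizing the one non-formal ingredient. Fix once and for all a parameter $\kappa>1$, large depending only on the quasi-triangle constant $A_{0}$ in \eqref{qm}; the numbers $a_{0}$ and $C_{1}$ in the definition of a dyadic system, and the exponent $\eta$ in \eqref{eq:small-boundary}, will be read off afterwards from $\kappa$ and the doubling constant $A_{1}$ in \eqref{db}. For each $k\in\Z$ pick, by Zorn's lemma, a maximal $\kappa^{k}$-separated subset $\{z^{k}_{\alpha}\}_{\alpha}$ of $X$. Maximality gives the usual two facts: the balls $B(z^{k}_{\alpha},\kappa^{k})$ cover $X$, while the balls $B(z^{k}_{\alpha},\kappa^{k}/(3A_{0}))$ are pairwise disjoint; the latter together with \eqref{db} bounds the number of $z^{k}_{\alpha}$ lying in any ball of radius $\lesssim\kappa^{k+1}$ by a constant depending only on $A_{0}$ and $A_{1}$. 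Since $\kappa^{k+1}>\kappa^{k}$, maximality of the coarser net lets us choose for each $\alpha$ a parent index $\pi(\alpha)$ with $\rho(z^{k}_{\alpha},z^{k+1}_{\pi(\alpha)})<\kappa^{k+1}$; iterating $\pi$ produces, for each net point, its chain of ancestors and, dually, its set of generation-$j$ descendants. Following Christ one then defines $Q^{k}_{\alpha}$ to be the interior of the closure of the union of the balls $B(z^{k-j}_{\gamma},c_{0}\kappa^{k-j})$, taken over all $j\geq 0$ and all generation-$j$ descendants $z^{k-j}_{\gamma}$ of $z^{k}_{\alpha}$, for a fixed small $c_{0}=c_{0}(A_{0})$, and sets $\calD_{k}=\{Q^{k}_{\alpha}\}_{\alpha}$ with $c_{Q^{k}_{\alpha}}=z^{k}_{\alpha}$.

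For $\kappa$ large the geometric series defining $Q^{k}_{\alpha}$ converges fast enough to give property~(4), $B(z^{k}_{\alpha},a_{0}\kappa^{k})\subseteq Q^{k}_{\alpha}\subseteq B(z^{k}_{\alpha},C_{1}\kappa^{k})$. The descendant sets are nested and, by an induction on $j$ that repeatedly applies \eqref{qm} (taking $c_{0}$ small and $\kappa$ large), the ball-unions attached to two distinct cubes of the same scale can be kept disjoint; this yields properties~(2) and~(3), nesting and unique parents, and is the fiddly part of the construction carried out in \cite{MR1096400}. Property~(1), that $X\setminus\bigcup_{\alpha}Q^{k}_{\alpha}$ is $\mu$-null, follows from the covering property of the nets once one knows, as a consequence of the small boundary estimate below, that the topological boundary of each cube is $\mu$-null.

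The crux is the small boundary estimate \eqref{eq:small-boundary}. Fix $Q=Q^{k}_{\alpha}$ and $0<\tau\leq 1$, and let $m\geq 0$ be the integer with $\kappa^{-m}\sim\tau$. By nesting, ``at scale $k-m$'' the set $X\setminus Q$ is a union of cubes of $\calD_{k-m}$ disjoint from $Q$, so $\partial_{\tau\diam(Q)}Q$ is contained in the union $L_{m}$ of the \emph{generation-$m$ boundary cubes of $Q$}: those $R\in\calD_{k-m}$ with $R\subseteq Q$ that lie within distance $\lesssim\kappa^{k-m}$ of some $R'\in\calD_{k-m}$ with $R'\cap Q=\emptyset$. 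Two facts force geometric decay of $\mu(L_{m})$. First, \eqref{qm} shows that the parent of a generation-$m$ boundary cube is a generation-$(m-1)$ boundary cube, so every cube in $L_{m}$ is a child of a cube in $L_{m-1}$. Second, every $S\in\calD_{k-m+1}$ has a child $S_{0}\in\calD_{k-m}$ with the same centre ($c_{S_{0}}=c_{S}$) --- which one arranges by building the nets so that each coarse-scale centre, here $c_{S}$, also occurs as a net point at the next finer scale and is then taken as its own parent. Since $\kappa$ is large, property~(4) gives $B(c_{S},a_{0}\kappa^{k-m})\subseteq S_{0}\subseteq B(c_{S},C_{1}\kappa^{k-m})$, so that $S_{0}$ sits so deep inside $B(c_{S},a_{0}\kappa^{k-m+1})\subseteq S\subseteq Q$ that $\dist(S_{0},X\setminus Q)\geq\dist(S_{0},X\setminus S)>C\kappa^{k-m}$; thus $S_{0}$ is never a boundary cube, while \eqref{db} gives $\mu(S_{0})\geq\mu(B(c_{S},a_{0}\kappa^{k-m}))\geq\theta\mu(S)$ with $\theta=\theta(A_{0},A_{1})>0$ fixed. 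Summing $\sum_{R\text{ child of }S,\ R\subseteq L_{m}}\mu(R)\leq\mu(S)-\mu(S_{0})\leq(1-\theta)\mu(S)$ over the (pairwise disjoint) cubes $S$ in $L_{m-1}$ gives $\mu(L_{m})\leq(1-\theta)\mu(L_{m-1})$, and iterating down to $L_{0}\subseteq Q$ yields $\mu(\partial_{\tau\diam(Q)}Q)\leq\mu(L_{m})\leq(1-\theta)^{m}\mu(Q)\lesssim\tau^{\eta}\mu(Q)$ with $\eta:=\log\tfrac{1}{1-\theta}/\log\kappa>0$. Letting $m\to\infty$ in the same estimate shows that the topological boundary of $Q$ is $\mu$-null, which is what property~(1) used.

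The only genuinely non-routine step is this last one: building the cubes and checking (1)--(4) is bookkeeping with \eqref{qm} and \eqref{db}, but the power $\tau^{\eta}$ in \eqref{eq:small-boundary} is not formal --- it rests on the mechanism that at every scale a cube cedes to its boundary layer only the proportion $1-\theta$ of its mass, because it keeps a fixed proportion in a child that is \emph{uniformly interior}. One therefore has to make the choices in the construction so that every cube really does contain such a concentric interior child of comparable measure, which is the point settled, in slightly different ways, in \cite{MR1096400} and in the later refinements of the dyadic structure on spaces of homogeneous type.
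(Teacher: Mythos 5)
Your sketch follows Christ's construction from the cited reference \cite{MR1096400} (maximal nets, a parent map, cubes as ball-unions over descendants, geometric decay for the boundary layer via the ``concentric interior child''), which is exactly the route the paper delegates to, so the approach matches.

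There is, however, one genuine gap in the small-boundary step. You bound only the \emph{inner} half of $\partial_{\tau\diam(Q)}Q$: your $L_m$ consists exclusively of cubes $R\subseteq Q$, while the paper's definition \eqref{eq:tau-boundary} makes $\partial_\tau(Q)$ two-sided, also containing the outer collar $\Set{x\in X\setminus Q:\dist(x,Q)\leq\tau}$. So the asserted inclusion $\partial_{\tau\diam(Q)}Q\subseteq L_m$ is false. The repair is routine and should be stated: a.e.\ point $x\notin Q$ with $\dist(x,Q)\leq\tau\diam(Q)$ lies in some $R'\in\calD_{k-m}$ with $R'\cap Q=\emptyset$, hence $R'$ sits in the \emph{inner} boundary collar of the unique scale-$k$ cube $Q'\supseteq R'$; by the quasi-triangle inequality and property~(4) only boundedly many such neighbours $Q'$ can occur, and by the ball-sandwich in property~(4) together with \eqref{db} each satisfies $\mu(Q')\lesssim\mu(Q)$, so your inner estimate applied to each $Q'$ yields the outer bound up to a constant. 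A secondary point worth flagging: the concentric-child device requires the nets to be nested ($z^{k+1}_\beta$ also appearing as a net point at scale $k$ and chosen as its own parent), which Christ's original construction does not insist on; you correctly note that one can arrange it, but in a bi-infinite range of scales $k\in\Z$ this nesting is itself a small extra lemma rather than a free choice, and it (or an equivalent ``keep a protected proportion of mass'' mechanism, as in the later refinements you allude to) is precisely where the proof stops being formal bookkeeping. With the outer-collar addendum and that caveat acknowledged, the argument is a correct rendition of Christ's proof.
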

\begin{remark}
It is known that for every quasimetric $\rho$ there exists $0<\alpha\leq 1$ and a metric $d$ such that $d(x,y) \sim \rho(x,y)^{\alpha}$, see \cite{MR546295} for the first proof and \cite{MR2538591} for the fact that one can choose $(2\Cref{qm})^{\alpha}=2$ (a much more verbose statement of the latter result also appears in \cite{MR2987059}).
It is easy to see that a system of (Christ) dyadic cubes with respest to $d$ is also a system of (Christ) dyadic cubes with respect to $\rho$ (with different constants).
However, the small boundary property of dyadic cubes seems to be substantially easier to achieve in the metric setting using the Hardy--Littlewood maximal function on $(0,\infty)$ as on \cite[p.~146]{MR1768535}.
\end{remark}

Throughout the article we fix a system of Christ dyadic cubes $\calD$ on $(X,\rho,\mu)$.
We denote by $\EE_{k}$ the conditional expectation operator onto the $\sigma$-algebra generated by $\calD_{k}$.
The martingale difference operator is denoted by $\DD_{k}=\EE_{k}-\EE_{k+1}$.
The function $\DD_{k}f$ is constant on each cube of scale $k$ and has integral $0$ on each cube of scale $k+1$.

\begin{definition}
A finite family $(\calD^{\alpha})_{\alpha}$ of systems of dyadic cubes is called \emph{adjacent} if all constants in their definitions coincide and there exists a constant $C_{3}<\infty$ such that for every ball $B(x,r)\subset X$ there exists $\alpha$ and a cube $Q\in\calD^{\alpha}$ such that $B(x,r)\subseteq Q \subseteq B(x,C_{3}r)$.
\end{definition}
\begin{theorem}[\cite{MR3113086}]
\label{thm:christ-cubes}
Every space of homogeneous type admits a finite collection of adjacent systems of dyadic cubes.
\end{theorem}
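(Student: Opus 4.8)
\textbf{Proof proposal for Theorem~\ref{thm:christ-cubes} (existence of adjacent dyadic systems).}

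The plan is to reduce the general quasi-metric case to the metric case and then to build finitely many shifted dyadic systems adapted to a single metric, following the strategy of Hyt\"onen and Kairema. First I would invoke the remark above: there is $0<\alpha\le 1$ and a genuine metric $d$ on $X$ with $d\sim\rho^{\alpha}$, and a system of (Christ) dyadic cubes with respect to $d$ is also one with respect to $\rho$ after adjusting constants; the containments $B_d(x,r)\subseteq Q\subseteq B_d(x,C_3 r)$ translate into analogous containments for $\rho$-balls since the two families of balls are comparable. Hence it suffices to treat $(X,d,\mu)$ with $d$ a metric, i.e.\ $A_0=1$ up to the usual harmless inflation of constants, and to produce for this metric a finite family $(\calD^\alpha)_\alpha$ with the adjacency property.

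For the metric case I would run Christ's (or the Hyt\"onen--Kairema) construction not once but with a finite set of ``dyadic grids'' obtained by translating the reference points. Concretely: fix $\kappa>1$ large and, by a maximal/separation argument, choose for each $k\in\Z$ a $\kappa^{k}$-net; Christ's construction then yields one system $\calD^{1}$ whose cubes $Q\in\calD^{1}_k$ satisfy $B(c_Q,a_0\kappa^{k})\subseteq Q\subseteq B(c_Q,C_1\kappa^{k})$. The point of adjacency is that an \emph{arbitrary} ball $B(x,r)$ need not lie inside a single cube of $\calD^{1}$ of the appropriate scale, because $x$ may fall near a cube boundary. To fix this one introduces finitely many additional systems $\calD^{2},\dots,\calD^{M}$ built from nets whose points are shifted by a bounded family of displacements, chosen so that for every $x$ and every scale at least one of the systems has a cube centered within distance $\asymp r$ of $x$; then that cube contains $B(x,r)$ and is contained in $B(x,C_3 r)$. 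The number $M$ of systems needed is bounded in terms of the doubling constant $A_1$ and $\kappa$ only, so the family is finite. One also checks the small boundary property \eqref{eq:small-boundary} for each $\calD^{\alpha}$ exactly as in the single-system case (via the Hardy--Littlewood maximal function on $(0,\infty)$ as indicated in the remark), so each $\calD^{\alpha}$ is in fact a Christ system; the adjacency constant $C_3$ in the definition is the same for all of them by construction.

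The main obstacle is the combinatorial heart of the shifting argument: one must verify that finitely many translated nets suffice to ``catch'' every ball $B(x,r)$ at every scale simultaneously, i.e.\ that the displacements can be chosen from a single finite set independent of $k$. This is where doubling enters quantitatively --- a ball of radius $\kappa^{k+1}$ contains boundedly many disjoint balls of radius $a_0\kappa^{k}$, which caps the number of ``bad'' positions one has to avoid and hence the number of shifted systems required. Once this counting is in place, the rest (nestedness of cubes across scales, the inner/outer ball containments, and the small boundary estimate) is routine and identical to the construction of a single Christ system, so the cited theorem of Hyt\"onen--Kairema \cite{MR3113086} follows. I would therefore present the proof as: (i) reduce to a metric; (ii) recall the single-grid Christ/Hyt\"onen--Kairema construction; (iii) carry out the finite shifting with the doubling count; (iv) verify adjacency and the small boundary property for each shifted grid.
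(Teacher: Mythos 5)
The paper gives no proof of this statement: it is quoted verbatim from Hyt\"onen--Kairema \cite{MR3113086}, so the only benchmark is whether your outline matches their construction, and in broad strokes it does. Your four-step plan (reduce to a metric, run the single-grid net construction, produce finitely many ``shifted'' grids with the number controlled by doubling, verify adjacency and small boundaries) is the right architecture, and the reduction via $d\sim\rho^{\alpha}$ is legitimate for transferring adjacency, although it is not actually needed --- Hyt\"onen and Kairema work directly with the quasi-metric. Also note that the small boundary property is not part of this theorem's statement (the paper only remarks afterwards that adjacent \emph{Christ} systems are ``also possible''), so step (iv) is proving more than required, which is harmless.

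The one place where I would not sign off is the step you yourself flag as ``the combinatorial heart.'' In a general space of homogeneous type there is no group structure, so ``nets whose points are shifted by a bounded family of displacements'' has no literal meaning; the actual construction replaces each net point $z^{k}_{\alpha}$ by one of boundedly many nearby candidate centers, indexed by a label $t\in\{1,\dots,K\}$, and the real work is twofold: (a) the choices must be coherent \emph{across scales} so that each labelled family of centers still generates a nested dyadic system (this is where the ``new points close to old points'' lemma of \cite{MR3113086} enters, not just a single-scale packing count), and (b) for a given ball $B(x,r)$ one must exhibit a single label $t$ for which the cube at the appropriate scale both contains $B(x,r)$ and has a center within $\asymp r$ of $x$. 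Your doubling count --- boundedly many disjoint balls of radius $a_{0}\kappa^{k}$ inside a ball of radius $\kappa^{k+1}$ --- controls the number of candidates at one scale but does not by itself deliver the cross-scale consistency or the existence of a good label for every ball simultaneously. As written, the proposal asserts rather than proves the theorem's actual content; to close it you would need to reproduce the selection argument of \cite[Theorem 4.1]{MR3113086} (or an equivalent), not just the packing estimate.
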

\begin{remark}
It is also possible to construct adjacent systems of Christ dyadic cubes.
\end{remark}
\begin{example}
Let $X=\R^{D}$ with the Euclidean distance and the Lebesgue measure.
For each $\alpha\in\Set{0,1,2}^{D}$ the corresponding \emph{shifted system of dyadic cubes} is given by
\[
\cD^{\alpha} = \Set{ 2^{-k}([0,1)^{D} + m + (-1)^{k}\frac13 \alpha), k\in\Z, m\in\Z^{D}}.
\]
Then the systems $\calD^{\alpha}$, $\alpha\in\Set{0,1,2}^{D}$, are adjacent. 
In fact, on $\R^{D}$ one can construct $D+1$ shifted systems of dyadic cubes that are adjacent \cite{MR1993970}.
\end{example}

\subsection{Sparse collections and operators}
\begin{definition}
Let $\cD$ be a system of dyadic cubes.
A collection $\sparse\subset\cD$ is called
\begin{enumerate}
\item \emph{$\eta$-sparse} if there exist pairwise disjoint subsets $E(Q)\subset Q\in\sparse$ with $\meas{E(Q)} \geq \eta \meas{Q}$ and
\item \emph{$\Lambda$-Carleson} if one has $\sum_{Q'\subset Q, Q'\in\sparse} \mu(Q') \leq \Lambda \mu(Q)$ for all $Q\in\cD$.
\end{enumerate}
\end{definition}
It is known that a collection is $\eta$-sparse if and only if it is $1/\eta$-Carleson \cite[\textsection 6.1]{arXiv:1508.05639}.

The corresponding \emph{sparse operator} is given by
\begin{equation}
\label{eq:sparse-operator}
A_{\sparse}f =  \sum_{Q\in\sparse} \one_{Q} \mean[CQ]{\abs{f}}
\end{equation}
and \emph{sparse square function} by
\begin{equation}
\label{eq:sparse-square}
A_{\sparse}^{2}f = \Big(\sum_{Q\in\sparse} \one_{Q} \mean[CQ]{\abs{f}}^{2}\Big)^{1/2}
\end{equation}
The sparse operators \eqref{eq:sparse-operator} and square functions \eqref{eq:sparse-square} can be dominated by finite linear combinations of similar sparse operators/square functions  with respect to adjacent dyadic grids in which the averages of $f$ are taken over $Q$ instead of $CQ$, cf.\ \cite[Remark 4.3]{MR3484688}.
Hence the usual estimates for sparse operators \cite{MR3778152,MR3455749} apply to \eqref{eq:sparse-operator} and \eqref{eq:sparse-square}.

We say that an operator $T$ is pointwise controlled by a sparse operator (or square function) with constant $C<\infty$ if for every function $f$ there exist $1/2$-sparse collections $\calS^{n}\subset\calD$, $n\in\N$, such that
\[
\abs{Tf}
\leq
C \liminf_{n\to\infty} A_{\calS^{n}}f
\quad\text{or}\quad
\abs{Tf}
\leq
C \liminf_{n\to\infty} A_{\calS^{n}}^{2}f,
\]
respectively, holds pointwise almost everywhere.

We use a version of the sparse domination principle in \cite{MR3484688}.
Given a dyadic grid $\calD$ denote by $\calDp$ the set of pairs $(Q',Q)\in\calD\times\calD$ with $Q'\subseteq Q$.
For a function $F : \calDp \to [0,\infty]$ let
\begin{equation}
\label{eq:Nop}
\Nop_{Q}F(x) := \sup_{x\in Q'\subseteq Q} F(Q',Q),
\quad
\Nop F(x) := \sup_{x\in Q'\subseteq Q} F(Q',Q).
\end{equation}
In the first definition above the supremum is taken over all $Q'$ and in the second over all $Q',Q$.
We use the convention that the supremum of an empty subset of $[0,\infty]$ is $0$.

A simple stopping time argument using the doubling condition and starting at a scale $k_{0}$ gives the following result.
\begin{lemma}
\label{lem:sparse-domination}
Let $F:\calDp\to [0,\infty]$ be a function that is monotonic in the sense that
\[
Q'''\subseteq Q''\subseteq Q'\subseteq Q
\implies
F(Q'',Q')\leq F(Q''',Q)
\]
and $\ell^{r}$-subadditive for some $0<r<\infty$ in the sense that
\[
Q''\subseteq Q'\subseteq Q
\implies
F(Q'',Q)^{r}\leq F(Q'',Q')^{r}+F(Q',Q)^{r}.
\]
Suppose that for every dyadic cube $Q \in \calD$ we have
\begin{equation}
\label{eq:local-weak(1,1)-bound}
\norm{\Nop_{Q}F}_{1,\infty}
\leq
C \norm{f}_{L^{1}(CQ)}.
\end{equation}
Then there exist $1/2$-sparse collections $\sparse^{k_{0}} \subset \calD$ of cubes such that
\begin{equation}
\label{eq:sparse-domination}
\Nop F
\lesssim
C \liminf_{k_{0}\to \infty} \Big( \sum_{Q\in\sparse^{k_{0}}} \one_{Q} \mean[CQ]{\abs{f}}^{r} \Big)^{1/r}
\end{equation}
holds pointwise almost everywhere.
\end{lemma}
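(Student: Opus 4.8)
The plan is to run a stopping-time decomposition inside ``top cubes'' of a fixed scale $k_{0}$, letting $k_{0}\to\infty$ only at the end; this last step is what yields the $\liminf$ and handles unbounded $X$. For $k_{0}\in\Z$ and a.e.\ $x\in X$ let $Q_{k_{0}}(x)\in\calD_{k_{0}}$ be the cube containing $x$; these increase with $k_{0}$, and by the monotonicity hypothesis $\Nop_{Q_{k_{0}}(x)}F(x)$ increases to $\Nop F(x)$. So it suffices, for fixed $k_{0}$ and each $Q_{0}\in\calD_{k_{0}}$, to produce a $1/2$-sparse family $\sparse(Q_{0})$ of dyadic subcubes of $Q_{0}$ with
\[
\Nop_{Q_{0}}F(x)\lesssim C\Big(\sum_{Q\in\sparse(Q_{0})}\one_{Q}(x)\,\mean[CQ]{\abs{f}}^{r}\Big)^{1/r}\quad\text{for a.e.\ }x\in Q_{0},
\]
since then $\sparse^{k_{0}}:=\bigcup_{Q_{0}\in\calD_{k_{0}}}\sparse(Q_{0})$ is $1/2$-sparse (distinct $Q_{0}$ are disjoint, so the sets $E(Q)$ stay pairwise disjoint), and passing to the $\liminf$ gives \eqref{eq:sparse-domination}.

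To build $\sparse(Q_{0})$, I would fix large constants $\Lambda,\lambda$ (depending only on the doubling constant and on the dilation factor $C$) and let $\mathcal{E}(Q_{0})$ be the collection of maximal cubes $Q'\subsetneq Q_{0}$ satisfying $\mean[CQ']{\abs{f}}>\Lambda\mean[CQ_{0}]{\abs{f}}$ or $F(Q',Q_{0})>\lambda C\mean[CQ_{0}]{\abs{f}}$. The cubes of the first type are covered by a level set of a dyadic maximal operator built from the dilated averages $\mean[CQ']{\abs{f}}$, whose weak $(1,1)$ bound — where the doubling condition enters, together with $\meas{CQ}\lesssim\meas{Q}$ — bounds their total measure by $\lesssim\Lambda^{-1}\meas{Q_{0}}$; the cubes of the second type are pairwise disjoint and contained in $\Set{\Nop_{Q_{0}}F>\lambda C\mean[CQ_{0}]{\abs{f}}}$, so \eqref{eq:local-weak(1,1)-bound} bounds their total measure by $\lesssim\lambda^{-1}\meas{Q_{0}}$. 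Taking $\Lambda,\lambda$ large makes $\sum_{Q'\in\mathcal{E}(Q_{0})}\meas{Q'}\leq\tfrac12\meas{Q_{0}}$. Iterating with each $Q'\in\mathcal{E}(Q_{0})$ as a new top cube, I set $\sparse(Q_{0}):=\Set{Q_{0}}\cup\bigcup_{Q'\in\mathcal{E}(Q_{0})}\sparse(Q')$; the sets $E(Q):=Q\setminus\bigcup\mathcal{E}(Q)$, $Q\in\sparse(Q_{0})$, are pairwise disjoint and of measure $\geq\tfrac12\meas{Q}$, so $\sparse(Q_{0})$ is $1/2$-sparse.

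For the pointwise estimate, fix $x\in Q_{0}$ and any cube $Q'$ with $x\in Q'\subseteq Q_{0}$; the goal is to bound $F(Q',Q_{0})$ uniformly in $Q'$. Since scales strictly decrease along strict inclusions, there is a smallest $P_{M}\in\sparse(Q_{0})$ with $x\in Q'\subseteq P_{M}$; let $Q_{0}=P_{0}\supsetneq P_{1}\supsetneq\dots\supsetneq P_{M}$ be the chain of cubes of $\sparse(Q_{0})$ through $x$ down to $P_{M}$, so $P_{i}\in\mathcal{E}(P_{i-1})$. Iterating $\ell^{r}$-subadditivity,
\[
F(Q',Q_{0})^{r}\leq F(Q',P_{M})^{r}+\sum_{i=1}^{M}F(P_{i},P_{i-1})^{r}.
\]
By minimality of $P_{M}$ the cube $Q'$ is contained in no cube of $\mathcal{E}(P_{M})$, so it satisfies neither defining alternative at $P_{M}$, giving $F(Q',P_{M})\lesssim\lambda C\mean[CP_{M}]{\abs{f}}$ (and for $Q'=P_{M}$ this follows instead from \eqref{eq:local-weak(1,1)-bound} at $P_{M}$ together with $\meas{Q'}\sim\meas{P_{M}}$). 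For the remaining terms, let $R=\widehat{P_{i}}$ be the dyadic parent of $P_{i}$, so $P_{i}\subseteq R\subseteq P_{i-1}$: maximality forces $R$ to satisfy neither alternative at $P_{i-1}$, hence $F(R,P_{i-1})\leq\lambda C\mean[CP_{i-1}]{\abs{f}}$ and $\mean[CR]{\abs{f}}\leq\Lambda\mean[CP_{i-1}]{\abs{f}}$; and since $\meas{R}\sim\meas{P_{i}}$ by doubling, \eqref{eq:local-weak(1,1)-bound} applied at $R$ to $F(P_{i},R)\leq\Nop_{R}F(y)$ (any $y\in P_{i}$) yields $F(P_{i},R)\lesssim C\mean[CR]{\abs{f}}\leq C\Lambda\mean[CP_{i-1}]{\abs{f}}$, whence $F(P_{i},P_{i-1})\lesssim C(\Lambda+\lambda)\mean[CP_{i-1}]{\abs{f}}$ by $\ell^{r}$-subadditivity (the case $R=P_{i-1}$ being immediate). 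Summing and using $\Set{P_{0},\dots,P_{M}}\subseteq\sparse(Q_{0})$ with every $P_{i}\ni x$,
\[
F(Q',Q_{0})^{r}\lesssim \big(C(\Lambda+\lambda)\big)^{r}\sum_{i=0}^{M}\mean[CP_{i}]{\abs{f}}^{r}\leq\big(C(\Lambda+\lambda)\big)^{r}\sum_{Q\in\sparse(Q_{0})}\one_{Q}(x)\,\mean[CQ]{\abs{f}}^{r};
\]
taking the supremum over $Q'$ and absorbing the absolute constants $\Lambda,\lambda$ gives the displayed per-cube estimate, and passing to the $\liminf$ over $k_{0}$ proves \eqref{eq:sparse-domination}.

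The step I expect to be the real obstacle is the control of the ``one-scale'' increments $F(P_{i},\widehat{P_{i}})$ between a selected cube and its dyadic parent: these are not governed by the stopping conditions, but \eqref{eq:local-weak(1,1)-bound} used at the parent's scale, combined with $\meas{\widehat{P_{i}}}\sim\meas{P_{i}}$, supplies precisely the bound by $\mean[C\widehat{P_{i}}]{\abs{f}}$ that lets the telescoping close; everything else is standard stopping-time bookkeeping, and the exact nature of the dilation $CQ$ only affects the constants $\Lambda,\lambda$.
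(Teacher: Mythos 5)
Your proof is correct. The paper gives no argument for this lemma beyond the remark that ``a simple stopping time argument using the doubling condition and starting at a scale $k_{0}$'' suffices; your write-up is a correct and complete realization of exactly that argument (double stopping condition on dilated averages and on $F(\cdot,Q_{0})$, sparsity from the two weak $(1,1)$ bounds, telescoping via $\ell^{r}$-subadditivity with the one-scale increments $F(P_{i},\widehat{P_{i}})$ handled by the localized weak $(1,1)$ bound at the parent together with doubling, and the $\liminf$ in $k_{0}$ producing $\Nop F$ from $\Nop_{Q_{k_{0}}(x)}F$ via the monotonicity hypothesis). No gaps.
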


\subsection{Bounded $r$-variation and jump counting}

\begin{definition}
Let $I$ be an ordered set and $(a_{t})_{t\in I}$ a family of complex numbers.
The \emph{homogeneous $r$-variation seminorm} is denoted by
\[
\hV{a_{t}}{t\in I} := \sup_{t_{0}<t_{1}<\dots<t_{J} \in I} \big( \sum_{j=1}^{J} \abs{a_{t_{j}} - a_{t_{j-1}}}^{r} \big)^{1/r}
\]
and the \emph{inhomogeneous $r$-variation norm} by
\[
\V{a_{t}}{t\in I} := \sup_{t\in I} \abs{a_{t}} + \hV{a_{t}}{t\in I}.
\]
The \emph{$\lambda$-jump counting function} $\jump{a_{t}}{t\in I}$ is the supremum over all $J$ such that there exist $t_{0}<t_{1}<\dots<t_{J}$ with $\abs{a_{t_{j}}-a_{t_{j-1}}}>\lambda$ for all $j=1,\dots,J$.
\end{definition}
We refer to \cite{MR2434308} for the basic properties of the jump counting function and its relation to $r$-variations.

\section{Short variations}
\label{sec:short}
We will consider two types of short variation operators, both involving a non-tangentional supremum component introduced in \cite{2013arXiv1309.2336K}.
Ergodic averages will be compared with a dyadic martingale using the short variations
\begin{equation}
\label{eq:short-var-av}
S_{k}f(x) := \sup_{\rho(x,x') \leq C \kappa^{k}} \V[2]{A_{t}f(x')-\EE_{k}f(x')}{\kappa^{k}/C\leq t\leq C\kappa^{k}}.
\end{equation}
For a CZ kernel $K$ as in Theorem~\ref{thm:sing} (in fact we only need the cancellation condition in $y$ and the size estimate, but not the smoothness condition) we define
\begin{equation}
\label{eq:short-var-sing}
S_{k}f(x) := \sup_{\rho(x,x') \leq C \kappa^{k}} \V[2]{ \int_{\kappa^{k}/C < \rho(x',y) < t} K(x',y) f(y) \dif y}{\kappa^{k}/C\leq t\leq C\kappa^{k}}.
\end{equation}
The non-tangentional short variation square function is defined by
\begin{equation}
\label{eq:short-var-sqf}
S f := (\sum_{k} (S_{k} f)^{2})^{1/2}.
\end{equation}
In the short variation we can in fact replace $2$-variation by $r$-variation for any $r>1$, see \cite{arxiv:1409.7120}.
The following result has been essentially proved in \cite{arxiv:1409.7120} following the arguments in \cite{MR1645330,MR1996394}.

\begin{theorem}
\label{thm:short}
Let $S_{k}$ be given either by \eqref{eq:short-var-av} or by \eqref{eq:short-var-sing}.
Then the operator $S$ given by \eqref{eq:short-var-sqf} has weak type $(1,1)$.
\end{theorem}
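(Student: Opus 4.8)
The plan is to prove that $S$ has weak type $(1,1)$ by combining an $L^2$ bound with a Calderón--Zygmund decomposition, following the template of \cite{MR1645330,MR1996394,arxiv:1409.7120}. First I would establish the $L^2$ estimate $\norm{Sf}_{L^2} \lesssim \norm{f}_{L^2}$. By Fubini it suffices to bound each $\norm{S_k f}_{L^2}$ and sum the squares. The non-tangential supremum over $\rho(x,x') \leq C\kappa^k$ is harmless at the $L^2$ level: since $S_k f(x)$ depends only on the values of $f$ near the ball $B(x, C'\kappa^k)$ and the quantity inside is already a local $2$-variation over a single dyadic scale, one controls $S_k f(x)$ pointwise by an average over a fixed-scale ball of a square function expression, and then uses the $D$-regularity and the standard Rademacher--Menshov / square-function argument (as in \cite{arxiv:1409.7120}) to get $\sum_k \norm{S_k f}_{L^2}^2 \lesssim \norm{f}_{L^2}^2$. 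For the averaging case \eqref{eq:short-var-av} one exploits that $A_t f(x') - \EE_k f(x')$ is a difference of two averages over comparable balls and a dyadic cube, so its $2$-variation in $t$ over $[\kappa^k/C, C\kappa^k]$ is dominated by an $L^2(\dif t/t)$-type square function of $t \partial_t A_t f$; for the singular-integral case \eqref{eq:short-var-sing} the variation in $t$ of the annular truncation is an integral of $|K(x',y) f(y)|$ over a thin annulus, whose $L^2$ norm is controlled by the size condition \eqref{eq:size} and $D$-regularity.

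Next I would run the Calderón--Zygmund decomposition at a height $\alpha > 0$: write $f = g + b$ where $g$ is the good part with $\norm{g}_\infty \lesssim \alpha$ and $\norm{g}_1 \lesssim \norm{f}_1$, and $b = \sum_i b_i$ with each $b_i$ supported on a dyadic cube $Q_i$, having mean zero, $\norm{b_i}_1 \lesssim \alpha \mu(Q_i)$, and $\sum_i \mu(Q_i) \lesssim \alpha^{-1}\norm{f}_1$. The good part is handled by the $L^2$ bound and Chebyshev: $\mu\Set{Sg > \alpha} \leq \alpha^{-2}\norm{Sg}_2^2 \lesssim \alpha^{-2}\norm{g}_2^2 \lesssim \alpha^{-1}\norm{f}_1$. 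For the bad part, one discards the union of dilated cubes $\bigcup_i C Q_i$, whose measure is $\lesssim \alpha^{-1}\norm{f}_1$ by doubling, and estimates $S b$ on the complement. Here the square-function structure \eqref{eq:short-var-sqf} is used crucially: since $b_i$ is supported on $Q_i$, the term $S_k b_i$ vanishes identically unless $\kappa^k$ is comparable to $\diam(Q_i)$ or larger with $x$ close to $Q_i$ — at scales $\kappa^k \gg \diam(Q_i)$ the cancellation of $b_i$ against the smooth/averaging kernel produces a gain, while at scales $\kappa^k \ll \diam(Q_i)$ the localization forces $x$ to lie essentially inside $CQ_i$. Outside $\bigcup_i CQ_i$ one uses the mean-zero property of $b_i$ together with the size bound (this is the only place where $b$ interacts with the kernel at its own or larger scales) to get, for each fixed $x \notin \bigcup CQ_i$, a bound $\sum_k S_k b_i(x)^2 \lesssim$ (a tail-sum in the ratio $\diam(Q_i)/\rho(x,Q_i)$) times $(\alpha\mu(Q_i))^2 / \rho(x, Q_i)^{2D}$ or similar, which after summing in $i$ and integrating over the complement is $\lesssim \alpha^{-1}\norm{f}_1$. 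A subtle point is that here we only have the cancellation in $y$ and the size estimate (as the theorem statement notes for the singular case), so one cannot use kernel smoothness; instead the mean-zero property of $b_i$ is paired with the \emph{fixed-scale} structure of each $S_k$, and the gain comes from the annular localization in $t$ combined with $b_i$ having integral zero, which makes the annular integral $\int_{\kappa^k/C < \rho(x',y) < t} K(x',y) b_i(y)\,\dif y$ small once the annulus either fully contains or fully misses $Q_i$.

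The main obstacle is the treatment of the non-tangential supremum in the bad-part estimate: the supremum over $x'$ with $\rho(x,x') \leq C\kappa^k$ means that a point $x$ outside $\bigcup_i CQ_i$ can still ``see'' a cube $Q_i$ through some nearby $x'$, so the geometric decay in $\rho(x, Q_i)$ must be extracted uniformly over this family of $x'$. The fix is the standard one from \cite{2013arXiv1309.2336K}: enlarge the exceptional set to $\bigcup_i C'Q_i$ for a slightly larger constant $C'$ (still of total measure $\lesssim \alpha^{-1}\norm{f}_1$), so that for $x$ in the complement \emph{every} competing $x'$ still satisfies $\rho(x', Q_i) \gtrsim \rho(x, Q_i)$, and then the pointwise kernel/size bounds apply with the supremum harmlessly absorbed. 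With this the summation in $i$ and $k$ converges geometrically and yields the claimed weak-type bound; interpolation with the $L^2$ estimate is not even needed since the two pieces are estimated directly. I would organize the write-up by first stating and proving the $L^2$ bound as a lemma, then doing the CZ decomposition, citing \cite{arxiv:1409.7120,MR1645330,MR1996394} for the routine square-function computations and giving details only for the way the fixed-scale localization interacts with the cancellation of the CZ kernel.
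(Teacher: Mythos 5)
There is a genuine gap, and it sits exactly where the theorem's hypotheses are never used in your argument. Your bad-part estimate proceeds cube by cube: you bound $\sum_k S_k b_i(x)^2$ for a single bad function $b_i$ and then ``sum in $i$ and integrate''. But $S_k$ contains a $2$-variation (a supremum over partitions of $[\kappa^k/C,C\kappa^k]$), which is only subadditive, so
\[
\sum_k S_k\Big(\sum_i b_i\Big)(x)^2 \;\leq\; \sum_k\Big(\sum_i S_k b_i(x)\Big)^2,
\]
and the right-hand side is \emph{not} controlled by $\sum_i\sum_k S_k b_i(x)^2$: at a fixed scale $k$ and point $x$ there are on the order of $\kappa^{-iD}$ bad cubes of scale $k+i$ within distance $\kappa^k$, so passing from the square of the sum to the sum of squares by Cauchy--Schwarz loses an unbounded factor. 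The paper's proof handles exactly this cross-term problem via the reverse H\"older inequality (Lemma~\ref{lem:reverse-holder-single-scale}): for each fixed radius $t$ only $O(\kappa^{i(\eta-D)})$ cubes of scale $k+i$ can meet the sphere $\partial B(x',t)$ --- this is where the \emph{small boundary property of balls} enters --- and a Cauchy--Schwarz with geometric weights $\kappa^{\alpha i}$, $\frac{D-\eta}{2}<\alpha<\frac D2$, converts the $\ell^1$ sum over cubes inside the variation into an $\ell^2$ sum with a controlled (in fact growing, but summable after integration) factor $(\kappa^k/\diam Q)^{2\alpha}$. Your proposal never invokes the small boundary property, which is an essential hypothesis of the theorem; without it the interchange of the square with the sum over cubes has no justification. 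The same lemma (together with the small boundary property of the dyadic cubes) is also what produces the almost-orthogonality $\norm{S_k\DD_{k+j}f}_2\lesssim 2^{-\epsilon|j|}\norm{\DD_{k+j}f}_2$ behind the $L^2$ bound, which you defer entirely to a citation.

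Two further points would fail as written. First, the pointwise decay ``tail-sum in $\diam(Q_i)/\rho(x,Q_i)$'' is not available: the truncations are sharp and (in the averaging case) there is no kernel at all, so the mean-zero property of $b_i$ against $\one_{B(x',t)}$ or against $K(x',\cdot)$ restricted to an annulus yields no gain in $\diam(Q_i)/\rho(x,Q_i)$; for a single cube one only gets $S_k b_i(x)\lesssim\kappa^{-kD}\norm{b_i}_1$ from the $1$-variation bound, with no smallness, and it is only the $L^1$-integration in $x$ combined with the $\ell^2$ structure in $k$ (and the counting argument above for the sum over $i$) that closes the estimate. Second, your fix for the non-tangential supremum does not work: enlarging the exceptional set to $\bigcup_i C'Q_i$ by a fixed factor cannot guarantee $\rho(x',Q_i)\gtrsim\rho(x,Q_i)$ for all competitors $x'$ with $\rho(x,x')\leq C\kappa^k$, since for large $k$ that ball of competitors has radius far exceeding $\rho(x,Q_i)$. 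The actual role of the dilation $CQ_i$ in the paper is different and weaker: it ensures that every bad cube contributing to $S_k$ at $x\notin E$ has scale $\leq k$, which is the hypothesis needed for Lemma~\ref{lem:reverse-holder-single-scale}; no lower bound on $\rho(x',Q_i)$ is used or needed.
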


As in \cite{arXiv:1605.02936}, Theorem~\ref{thm:short} immediately implies that $S$ is pointwise controlled by a sparse square function.
Using the results from \cite{MR3778152} this implies a sharp weighted refinement of the case $p>1$ of \cite[Theorem 1.4]{arxiv:1409.7120}.
Similarly as in \cite{arXiv:1605.02936} one can also show $\norm{Sf}_{L^{2}(w)} \lesssim \norm{f}_{L^{2}(Mw)}$ and $\norm{Sf}_{L^{1,\infty}(w)} \lesssim \norm{f}_{L^{1}(Mw)}$.

\subsection{A reverse Hölder inequality}
\label{sec:reverse-hoelder}
\begin{lemma}[{cf.\ \cite[Lemma 4.2]{MR1996394}}]
\label{lem:reverse-holder-single-scale}
Let $\calQ\subset\calD$ be a collection of disjoint dyadic cubes of scale $\leq k$.
For each $Q\in\calQ$ let $b^{Q}$ be a scalar-valued function supported on $Q$ with $\int b^{Q} = 0$.
Then for every $\alpha> \frac{D-\eta}{2}$ we have
\begin{equation}
\label{eq:V-bi-biQ-single-scale}
S_{k}(\sum_{Q\in\calQ} b^{Q}(x))^{2}
\lesssim
(\kappa^{k})^{-2 D} \sum_{Q : \dist(x,Q) \lesssim \kappa^{k}} (\kappa^{k}/\diam(Q))^{2 \alpha} \norm{b^{Q}}_{1}^{2}.
\end{equation}
\end{lemma}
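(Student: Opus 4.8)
The plan is to treat each $b^Q$ separately and sum, using the $\ell^2$-orthogonality of the short $2$-variation square function is not needed here since \eqref{eq:V-bi-biQ-single-scale} already sums the individual contributions on the right. So fix $x$ and fix a single cube $Q\in\calQ$ with $\dist(x,Q)\lesssim\kappa^k$ (cubes far from $x$ contribute nothing, since $b^Q$ is supported in $Q$ and $S_k$ only sees the range $\kappa^k/C<\rho(x',y)<C\kappa^k$ with $\rho(x,x')\le C\kappa^k$). The key is to estimate $S_k b^Q(x)$ pointwise. For the averaging short variation \eqref{eq:short-var-av}, this is controlled by the $2$-variation in $t\in[\kappa^k/C,C\kappa^k]$ of $A_t b^Q(x') - \EE_k b^Q(x')$ over a non-tangential point $x'$; for the singular-integral version \eqref{eq:short-var-sing} it is the $2$-variation of $\int_{\kappa^k/C<\rho(x',y)<t}K(x',y)b^Q(y)\dif y$. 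In both cases one uses the cancellation $\int b^Q=0$: writing $A_t b^Q(x')$ as $\mu(B(x',t))^{-1}\int_{B(x',t)}b^Q$, and using that $b^Q$ is supported in $Q$, one sees that $\int_{B(x',t)}b^Q$ equals $-\int_{Q\setminus B(x',t)}b^Q$, so only the part of $Q$ near the boundary sphere $\rho(x',\cdot)=t$ matters. This is where the small boundary property enters — it is the mechanism that makes the exponent $\alpha$ show up, and we want $\alpha$ up to $(D-\eta)/2$ less than... wait, the bound is for $\alpha>\frac{D-\eta}{2}$, i.e.\ we can take $\alpha$ arbitrarily large but the inequality degrades; actually we want $\alpha$ close to $\frac{D-\eta}{2}$ from above, which is the useful (smallest) power of $\kappa^k/\diam(Q)$.

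The main computation is a single-cube variation estimate: I claim
\[
\V[2]{A_t b^Q(x') - \EE_k b^Q(x')}{\kappa^k/C\le t\le C\kappa^k}
\lesssim
(\kappa^k)^{-D}\,(\kappa^k/\diam Q)^{\alpha}\,\norm{b^Q}_1
\]
for each admissible $x'$, and similarly for the singular kernel. To see this, split the $t$-interval $[\kappa^k/C,C\kappa^k]$ dyadically is not quite right since it is a bounded ratio; instead, parametrize and bound the $2$-variation by an $L^2$-type square-function estimate $\big(\int (\tdif{}{t}A_t b^Q(x'))^2 t\,\dif t\big)^{1/2}$ or, more robustly, by the telescoping trick: for any increasing sequence $t_0<\dots<t_J$ in the interval, $\sum_j |A_{t_j}b^Q(x')-A_{t_{j-1}}b^Q(x')|^2 \le \big(\sum_j |A_{t_j}b^Q(x')-A_{t_{j-1}}b^Q(x')|\big)^2$, and each difference is controlled by the measure of the symmetric difference of balls times $\sup|b^Q|$... but $b^Q$ is only in $L^1$, so instead one writes $A_{t_j}b^Q(x')-A_{t_{j-1}}b^Q(x') = \int b^Q(y)\,[\mu(B(x',t_j))^{-1}\one_{B(x',t_j)}(y) - \mu(B(x',t_{j-1}))^{-1}\one_{B(x',t_{j-1})}(y)]\,\dif y$ and uses $D$-regularity to bound the bracket pointwise by $C(\kappa^k)^{-D}$ off a thin annulus and to control the total variation in $t$ of the kernel. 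Using the cancellation to restrict to $y\in Q$, the relevant quantity is the amount of $|b^Q|$-mass lying within distance $\sim\diam Q$ of the sphere $\{\rho(x',\cdot)=t\}$, integrated/supremized over $t$; by \eqref{eq:rho-holder} this sphere-neighbourhood, intersected with $Q$, is comparable to a $\partial_\tau$-neighbourhood of $Q$ with $\tau\sim\diam Q$ relative to scale $\kappa^k$, and the small boundary property \eqref{eq:small-boundary} gives a gain of $(\diam Q/\kappa^k)^{\eta}$ in measure — hence in the $L^1$-mass of $b^Q$ that can be concentrated there, after an averaging argument over translates of the sphere. Iterating/optimizing this gain against the trivial bound yields the exponent $\alpha$, with the constraint $2\alpha<D+\eta$, i.e.\ $\alpha>\frac{D-\eta}{2}$ is exactly the complementary threshold after squaring and accounting for the $(\kappa^k)^{-2D}$ normalization.

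For the singular-integral short variation \eqref{eq:short-var-sing} the argument is parallel but cleaner: $\int_{\kappa^k/C<\rho(x',y)<t}K(x',y)b^Q(y)\dif y$ as a function of $t$ has $2$-variation bounded, via the same telescoping, by $\sup_t \int_{\rho(x',y)\sim t}|K(x',y)||b^Q(y)|\dif y$ over the annulus, and using the size bound \eqref{eq:size} together with $\rho(x',y)\sim\kappa^k$ this is $\lesssim (\kappa^k)^{-D}$ times the $|b^Q|$-mass in the annulus — again a sphere-neighbourhood of controlled width intersected with $Q$, to which the small boundary property applies as above. The cancellation $\int b^Q=0$ is used to replace $K(x',y)$ by $K(x',y)-K(x',c_Q)$ when $Q$ is far from $x'$ on the scale of $\diam Q$, picking up an extra $(\diam Q/\kappa^k)^{\eta}$ from the smoothness estimate \eqref{eq:smoothness}; this is an alternative route to the same gain. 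Finally one sums over $Q\in\calQ$ with $\dist(x,Q)\lesssim\kappa^k$: since these cubes are disjoint and of scale $\le k$, and since the right-hand side already carries the per-cube factor $(\kappa^k/\diam Q)^{2\alpha}\norm{b^Q}_1^2$, no further orthogonality is needed — the estimate $S_k(\sum b^Q)(x)^2 \le (\sum_Q S_k b^Q(x))^2$ would lose a factor, so instead one should use that $S_k$ applied to a sum is, by the triangle inequality for the $2$-variation norm, at most $\sum_Q S_k b^Q(x)$, and then Cauchy–Schwarz with weights $(\kappa^k/\diam Q)^{-\alpha'}$ for some $\alpha'$ slightly smaller than $\alpha$ (summable in the number of cubes at each scale near $x$, which is bounded per scale, and then geometrically in scale) converts this into the claimed $\ell^2$-sum with the stated exponent. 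The main obstacle is the single-cube variation estimate with the sharp exponent — specifically, extracting the power $(\diam Q/\kappa^k)^{\eta}$ from the small boundary property uniformly in the non-tangential point $x'$ and in $t$, which requires averaging the small-boundary estimate over a family of spheres (equivalently, applying it to an annular neighbourhood rather than a single sphere) and using \eqref{eq:rho-holder} to relate $\rho$-spheres to the $\dist$-boundary neighbourhoods in \eqref{eq:tau-boundary}.
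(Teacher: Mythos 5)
There is a genuine gap, and it sits exactly where the exponent $\alpha>(D-\eta)/2$ has to come from. Your plan is to prove a per-cube estimate $S_k b^Q(x)\lesssim \kappa^{-kD}(\kappa^k/\diam Q)^{\alpha}\|b^Q\|_1$ and then sum over cubes by Cauchy--Schwarz with weights $(\kappa^k/\diam Q)^{-\alpha'}$. Two steps fail. First, the mechanism you propose for the per-cube bound is invalid: the small boundary property controls the \emph{measure} of a thin neighbourhood of the sphere $\{\rho(x',\cdot)=t\}$, but $b^Q$ is merely an $L^1$ function, so all of its mass may be concentrated on that thin set; a small-measure set gives no gain ``in the $L^1$-mass of $b^Q$ that can be concentrated there''. (In the singular case, cancellation plus \eqref{eq:smoothness} does give a per-cube gain, but only when $Q$ lies entirely inside an annulus $B(x,t_{j+1})\setminus B(x,t_j)$; for cubes straddling a sphere the truncation destroys the cancellation, and those are the main case.) In truth the only available per-cube bound is the trivial one, $S_k b^Q\lesssim\kappa^{-kD}\|b^Q\|_1$, with no gain. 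Second, your closing Cauchy--Schwarz needs $\sum_{Q:\dist(x,Q)\lesssim\kappa^k}(\diam Q/\kappa^k)^{2\alpha'}<\infty$, and the number of cubes of scale $k+i$ within distance $\kappa^k$ of $x$ is of order $\kappa^{-iD}$, not ``bounded per scale''; so this sum converges only for $\alpha'>D/2$. That range is useless downstream: Lemma~\ref{lem:short-variation-single-scale} and the weak type $(1,1)$ argument both require $\alpha<D/2$, and the whole point of the lemma is to reach exponents between $(D-\eta)/2$ and $D/2$.

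The actual role of the small boundary property is combinatorial and operates \emph{across} cubes for each fixed jump, not within a single cube. Fix the jump sequence $(t_j)$; the difference $\sum_Q\Bop b^Q(x,t_j,t_{j+1})$ receives contributions only from cubes meeting $\partial B(x,t_j)$ or $\partial B(x,t_{j+1})$ (besides the once-per-cube case of a cube contained in the annulus). The small boundary property of the \emph{ball} $B(x,t)$ bounds the number of scale-$(k+i)$ cubes meeting its boundary by $O(\kappa^{i(\eta-D)})$, a gain of $\kappa^{i\eta}$ over the total count $\kappa^{-iD}$. Cauchy--Schwarz in the pair (scale, cube) with weights $\kappa^{\alpha i}$ then closes precisely when $\sum_{i\le 0}\kappa^{i(\eta-D+2\alpha)}<\infty$, i.e.\ $\alpha>(D-\eta)/2$; one then swaps the sums over $j$ and $Q$ and, for each single cube, bounds $\sum_j|\Bop b^Q|^2\le(\sum_j|\Bop b^Q|)^2\lesssim(\kappa^{-kD}\|b^Q\|_1)^2$. (Your parenthetical derivation ``$2\alpha<D+\eta$, i.e.\ $\alpha>\frac{D-\eta}{2}$'' is not a correct equivalence, which is a symptom of the threshold never actually being produced by your argument.)
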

\begin{proof}
Clearly only cubes with $\dist(x,Q) \lesssim \kappa^{k}$ contribute to the left-hand side, so we may remove all other cubes from $\calQ$.
Now the right-hand side of the conclusion becomes independent of $x$, so it suffices to estimate the variation at an arbitrary point, which we again call $x$.

We consider only the homogeneous variation, in order to get the inhomogeneous variation it suffices to additionally consider an arbitrary (but fixed) $t$, which is similar but easier.
For a suitable sequence $\kappa^{k}/C\leq t_{1}<\dots<t_{J} \leq C \kappa^{k}$ we have
\[
S_{k}(\sum_{Q} b^{Q}(x))^{2}
\lesssim
\sum_{j} \abs{\sum_{Q} \Bop b^{Q}(x,t_{j},t_{j+1})}^{2},
\]
where $\Bop$ is either a difference between two averages or an integral over an annulus.
We decompose $\calQ = \cup_{i\leq 0} \calQ_{i}$ according to scale: $\calQ_{i}=\calQ\cap\calD_{k+i}$.
Each cube $Q$ can only contribute to the $j$-th summand non-trivially in two cases:
\begin{enumerate}
\item if $Q\cap (\partial B(x,t))\neq\emptyset$ for $t=t_{j}$ or $t=t_{j+1}$
\item or $Q \subset B(x,t_{j+1})\setminus B(x,t_{j})$ in the case \eqref{eq:short-var-sing}.
\end{enumerate}
The second case can only occur for one index $j$, and the contribution of this term can be estimated even without the growing factor in \eqref{eq:V-bi-biQ-single-scale}.
In the first case, for a fixed scale $k+i$, $i\leq 0$, the small boundary property implies that there are at most $O(\kappa^{kD}\kappa^{i\eta}/\kappa^{(k+i)D})=O(\kappa^{i(\eta-D)})$ cubes of this kind in $\calQ_{i}$, call this collection $\calQ_{i,j}$.
Splitting the collection $\calQ$ into scales and applying Hölder's inequality we estimate
\begin{multline}
\label{eq:V-bi-biQ:CS-single-scale}
\sum_{j} \abs{\sum_{i\leq 0} \sum_{Q \in \calQ_{i}} \Bop b^{Q}(x, t_{j}, t_{j+1})}^{2}\\
\lesssim
\sum_{j}
\big( \sum_{i\leq 0} \sum_{Q\in\calQ_{i,j}} \abs{\kappa^{-\alpha i} \Bop b^{Q}(x, t_{j}, t_{j+1})}^{2} \big)
\cdot \big(\sum_{i\leq 0} \sum_{Q\in\calQ_{i,j}} \abs{\kappa^{\alpha i}}^{2} \big)\\
\lesssim
\sum_{j}
\sum_{i\leq 0} \sum_{Q\in\calQ_{i}} \abs{\kappa^{-\alpha i} \Bop b^{Q}(x, t_{j}, t_{j+1})}^{2}
\end{multline}
in view of the hypothesis on $\alpha$ and the fact that $\abs{\calQ_{i,j}} \lesssim \kappa^{i(\eta-D)}$.
Estimating the $2$-variation norm by the $1$-variation norm we obtain
\begin{multline*}
\eqref{eq:V-bi-biQ:CS-single-scale} \lesssim
\sum_{i\leq 0} \kappa^{-2 \alpha i} \sum_{Q\in\calQ_{i}} \big( \sum_{j} \Bop b^{Q}(x, t_{j}, t_{j+1}) \big)^{2}\\
\lesssim
\kappa^{-2kD} \sum_{Q : \dist(Q,x) \lesssim \kappa^{k}} (\kappa^{k}/\diam(Q))^{2 \alpha} \norm{b^{Q}}_{1}^{2}.
\end{multline*}
Taking the supremum over sequences $(t_{j})_{j}$ we obtain \eqref{eq:V-bi-biQ-single-scale}.
\end{proof}

\subsection{Strong type $(2,2)$}
\begin{lemma}
\label{lem:short-variation-single-scale}
There exists an $\epsilon>0$ such that for all $k,j\in\Z$ we have
\begin{equation}
\label{eq:short-variation-single-scale:claim}
\norm{S_{k}(\DD_{k+j} f)}_{2}
\lesssim
2^{-\epsilon \abs{j}} \norm{\DD_{k+j} f}_{2}.
\end{equation}
\end{lemma}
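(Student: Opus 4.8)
The estimate splits naturally into two regimes according to the sign of $j$, and in each regime the bound comes from a different mechanism. The plan is to prove an $L^{2}$ bound with exponential gain as $j\to+\infty$ (fine martingale difference, coarse short variation) using orthogonality, and a separate $L^{2}$ bound with exponential gain as $j\to-\infty$ (coarse martingale difference, fine short variation) using the reverse Hölder inequality of Lemma~\ref{lem:reverse-holder-single-scale}.

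\emph{Case $j\geq 0$ (smoothing wins).} Here $\DD_{k+j}f$ oscillates on cubes much smaller than the scale $\kappa^{k}$ at which $S_{k}$ averages. I would fix $x$ and a sequence $\kappa^{k}/C\le t_{1}<\dots<t_{J}\le C\kappa^{k}$ and write $S_{k}(\DD_{k+j}f)(x)^{2}\lesssim\sum_{l}|\Bop(\DD_{k+j}f)(x,t_{l},t_{l+1})|^{2}$, where $\Bop$ is either a difference of two normalized averages over balls of radius $\sim\kappa^{k}$ or an integral over an annulus of inner radius $\sim\kappa^{k}$. In the averaging case \eqref{eq:short-var-av} one uses that $A_{t}-A_{s}$ (and $A_{t}-\EE_{k}$) has a kernel of size $O(\kappa^{-kD})$ supported in a ball of radius $\sim\kappa^{k}$ whose integral against any function constant on cubes of scale $k$ is controlled by the boundary layer; pairing with the mean-zero-on-scale-$(k+j+1)$ property of $\DD_{k+j}f$ and the small boundary property \eqref{eq:small-boundary} produces a factor $\kappa^{-j\eta/2}$ per scale. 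In the singular-integral case \eqref{eq:short-var-sing} the cancellation condition \eqref{eq:cancel} in $y$ together with the Hölder smoothness \eqref{eq:smoothness} in $y$ (not needed here, only the size and $y$-cancellation, as the remark before the lemma notes) gives the same gain. One then sums a Schur-type / almost-orthogonality estimate over $t_{l}$ and takes $L^{2}$ norms, or more simply uses $\|S_k g\|_2 \lesssim \|g\|_2$ from Theorem~\ref{thm:short} combined with the pointwise kernel gain; the net effect is $2^{-\epsilon j}\|\DD_{k+j}f\|_{2}$ with $\epsilon$ proportional to $\eta$.

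\emph{Case $j\le 0$ (cancellation wins).} Now $\DD_{k+j}f$ is constant on cubes of scale $k+j\ge k$ much larger than $\kappa^{k}$, but it has integral zero on each cube of scale $k+j+1$, so it is a sum $\sum_{Q\in\calQ}b^{Q}$ of mean-zero bumps $b^{Q}=\one_Q\,\DD_{k+j}f$ over the disjoint cubes $Q\in\calD_{k+j}$, each of diameter $\sim\kappa^{k+j}=\kappa^{k}\kappa^{j}$ — wait, this is the opposite ordering, so instead I would group $\DD_{k+j}f$ by its cubes of scale $k+j+1$ on each of which it integrates to zero, giving mean-zero bumps $b^{Q}$ on cubes $Q\in\calD_{k+j+1}$ of diameter $\sim\kappa^{k}\kappa^{j}\gg\kappa^{k}$ when $j\ge 0$; for $j\le 0$ these cubes are smaller than $\kappa^k$, which is precisely the setting of Lemma~\ref{lem:reverse-holder-single-scale} with $\diam(Q)\sim\kappa^{k+j}$. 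Applying \eqref{eq:V-bi-biQ-single-scale} with any fixed $\alpha>(D-\eta)/2$ gives, pointwise,
\[
S_{k}(\DD_{k+j}f)(x)^{2}
\lesssim
(\kappa^{k})^{-2D}\sum_{Q:\dist(x,Q)\lesssim\kappa^{k}}\kappa^{-2\alpha j}\,\|b^{Q}\|_{1}^{2}.
\]
Integrating in $x$, each cube $Q$ contributes a region of measure $\sim\kappa^{kD}$, so $\|S_{k}(\DD_{k+j}f)\|_{2}^{2}\lesssim\kappa^{-2\alpha j}\kappa^{-kD}\sum_{Q}\|b^{Q}\|_{1}^{2}$. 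Since $\mathrm{diam}(Q)\sim\kappa^{k+j}$ we have $\|b^{Q}\|_{1}\le|Q|^{1/2}\|b^{Q}\|_{2}\lesssim(\kappa^{(k+j)D})^{1/2}\|b^{Q}\|_{2}$, hence $\sum_{Q}\|b^{Q}\|_{1}^{2}\lesssim\kappa^{(k+j)D}\|\DD_{k+j}f\|_{2}^{2}$, and therefore
\[
\|S_{k}(\DD_{k+j}f)\|_{2}^{2}
\lesssim
\kappa^{-2\alpha j}\,\kappa^{jD}\,\|\DD_{k+j}f\|_{2}^{2}
=
\kappa^{j(D-2\alpha)}\|\DD_{k+j}f\|_{2}^{2}.
\]
Because $j\le 0$ and $D-2\alpha<\eta$ with the freedom to take $\alpha$ close to $D/2$ so that $D-2\alpha>0$, i.e.\ $\alpha\in((D-\eta)/2,\,D/2)$, the exponent $j(D-2\alpha)$ is negative and bounded away from $0$, giving $\|S_{k}(\DD_{k+j}f)\|_{2}\lesssim 2^{-\epsilon|j|}\|\DD_{k+j}f\|_{2}$ for a suitable $\epsilon>0$ depending on $\eta$ and $\kappa$.

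\emph{Main obstacle.} The delicate point is the case $j\ge 0$: one must genuinely use the mean-zero (martingale) property of $\DD_{k+j}f$ on the fine scale together with regularity of the averaging/singular kernel at the coarse scale, and extract a \emph{quantitative} power gain rather than a mere $o(1)$. This requires pairing the boundary-layer count from the small boundary property \eqref{eq:small-boundary} — roughly $\kappa^{-j\eta}$ fine cubes meet a fixed sphere $\partial B(x,t)$ of radius $\sim\kappa^{k}$ — against the $\ell^{2}$-in-$t_{l}$ sum, exactly as in the single-scale reverse Hölder estimate but now with the roles of "small" and "large" reversed; one can in fact reduce this case to Lemma~\ref{lem:reverse-holder-single-scale} applied at scale $k$ itself after writing $\DD_{k+j}f=\EE_{k+j}f-\EE_{k+j+1}f$ and telescoping, or argue directly as above. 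The remaining steps (estimating $\|S_k g\|_2\lesssim\|g\|_2$, Hölder in $t_{l}$, summing the geometric series in $i$ with the constraint $\alpha>(D-\eta)/2$) are routine.
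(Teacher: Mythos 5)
Your split into $j\le 0$ (cancellation) and $j\ge 0$ is the right structure, and your treatment of the case $j\le 0$ matches the paper: group $\DD_{k+j}f$ into mean--zero bumps on cubes of scale $k+j+1\le k$, apply Lemma~\ref{lem:reverse-holder-single-scale}, integrate in $x$, and choose $\alpha\in((D-\eta)/2,D/2)$. Your numerology $\|S_k\DD_{k+j}f\|_2^2\lesssim\kappa^{j(D-2\alpha)}\|\DD_{k+j}f\|_2^2$ is the same as the paper's $\kappa^{j(D/2-\alpha)}$ bound on the $L^2$ norm. (One small point you leave implicit but the paper states: $\EE_k\DD_{k+j}=0$ for $j<0$, which removes the $\EE_k$ term from \eqref{eq:short-var-av}; this is automatic from Lemma~\ref{lem:reverse-holder-single-scale}, so no harm done.)

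The case $j\ge 0$ is where you have a genuine gap, rooted in a reversed reading of the dyadic convention. In this paper $\calD_{k}$ consists of cubes of diameter $\sim\kappa^{k}$ with $\kappa>1$, so larger index means \emph{larger} cube, and $\DD_{k+j}=\EE_{k+j}-\EE_{k+j+1}$ is constant on cubes of scale $k+j$. Hence for $j\ge 0$ the function $\DD_{k+j}f$ is \emph{locally constant at a scale coarser than} $\kappa^{k}$, not ``oscillating on cubes much smaller than $\kappa^{k}$'' as you write. The mean--zero property of $\DD_{k+j}f$ lives on cubes of scale $k+j+1$, which have diameter $\kappa^{k+j+1}\gg\kappa^{k}$; the $S_{k}$ kernels (averages over balls of radius $\sim\kappa^{k}$, or integrals over annuli of that radius) cannot see across such a cube, so the cancellation you invoke cannot be exploited. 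Consequently the telescoping reduction you propose to Lemma~\ref{lem:reverse-holder-single-scale} at scale $k$ also fails: that lemma requires the bump cubes to have scale $\le k$, whereas here they would have scale $k+j+1>k$. Finally, invoking $\|S_{k}g\|_{2}\lesssim\|g\|_{2}$ from Theorem~\ref{thm:short} is circular, since that theorem is deduced from this very lemma.

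The mechanism the paper actually uses for $j\ge 0$ is the opposite of what you describe: because $\DD_{k+j}f$ is constant on each $Q\in\calD_{k+j}$, the quantity $S_{k}(\DD_{k+j}f)(x)$ vanishes whenever $\dist(x,\partial Q)\gtrsim\kappa^{k}$ for every $Q$, i.e.\ $S_{k}(\DD_{k+j}f)$ is supported in the $\kappa^{k}$--boundary layers of the coarse cubes. On that layer one trivially bounds the $2$--variation by the $1$--variation and gets $S_{k}(\one_{Q}\DD_{k+j}f)\lesssim |c_{Q}|\,\one_{\partial_{\kappa^{k}}Q}$ (with $c_{Q}$ the constant value). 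Squaring and summing over the $Q\in\calD_{k+j}$, whose boundary layers have bounded overlap, and applying the small boundary property \eqref{eq:small-boundary} with $\tau\sim\kappa^{-j}$ gives $\mu(\partial_{\kappa^{k}}Q)\lesssim\kappa^{-j\eta}\mu(Q)$, hence $\|S_{k}\DD_{k+j}f\|_{2}^{2}\lesssim\kappa^{-j\eta}\|\DD_{k+j}f\|_{2}^{2}$. This is the step your outline is missing.
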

\begin{proof}
Rescaling the metric we may assume $k=0$.

Consider first $j< 0$.
Then $\EE_{0}\DD_{j}=0$, and by Lemma~\ref{lem:reverse-holder-single-scale} with $\calQ=\calD_{j+1}$ we obtain
\begin{align*}
S_{0} \DD_{j}f(x)
&\lesssim
\Big(\sum_{Q \in \calD_{j+1} : \dist(x,Q) \lesssim 1} (\kappa^{-j})^{\alpha 2} \norm{\one_{Q} \DD_{j} f}_{1}^{2} \Big)^{1/2}\\
&\lesssim
(\kappa^{-j})^{\alpha} \Big(\sum_{Q \in \calD_{j+1} : \dist(x,Q) \lesssim 1} \norm{\one_{Q} \DD_{j} f}_{2}^{2} \meas{Q}\Big)^{1/2}\\
&\lesssim
\kappa^{j(D/2-\alpha)} \Big(\int_{\dist(x,y)\lesssim 1} \abs{\DD_{j} f(y)}^{2} \Big)^{1/2},
\end{align*}
and the conclusion follows since the averaging operator of scale $1$ is bounded on $L^{1}$ and we can choose $\frac{D-\eta}{2} < \alpha < \frac{D}{2}$.

Consider now $j\geq 0$.
Then $S_{0}$ does not vanish only in $1$-neighborhoods of boundaries of $Q\in\calD_{j}$, and there we can estimate the $2$-variation by the $1$-variation.
In particular,
\begin{align*}
S_{0} \DD_{j}f(x)
\leq
\sum_{Q\in\calD_{j}} S_{0} (\one_{Q}\DD_{j}f)(x)
\lesssim
\sum_{Q\in\calD_{j}} \norm{\one_{Q}\DD_{j}f}_{1} \one_{\partial_{1} Q}(x).
\end{align*}
The latter characteristic functions have bounded overlap in view of the doubling property of our measure space.
Hence
\[
\norm{S_{0} \DD_{j}f}_{2}^{2}
\lesssim
\sum_{Q\in\calD_{j}} \norm{\one_{Q}\DD_{j}f}_{1}^{2} \meas{\partial_{1} Q}
\lesssim
\sum_{Q\in\calD_{j}} \norm{\one_{Q}\DD_{j}f}_{1}^{2} \meas{Q} \kappa^{-j\eta}
=
\kappa^{-j\eta} \norm{\DD_{j}f}_{2}^{2},
\]
where we have used the small boundary property \eqref{eq:small-boundary} of the dyadic cubes.
\end{proof}

\subsection{Weak type \texorpdfstring{$(1,1)$}{(1,1)}}
\begin{proof}[Proof of Theorem~\ref{thm:short}]
It follows from Lemma~\ref{lem:short-variation-single-scale} that $S$ has strong type $(2,2)$.

By homogeneity it suffices to prove
\[
\meas{\Set{ x : S f(x) > 1 }}
\lesssim
\norm{f}_{1}.
\]
We use the Calderón--Zygmund decomposition.
Let $\calQ\subset\calD$ be disjoint cubes such that such that $\norm{f}_{L^{\infty}(X \setminus \cup \calQ)} \leq 1$, $\sum_{Q\in\calQ} \meas{Q} \lesssim \norm{f}_{1}$, and $\norm{f}_{L^{1}(Q)} \lesssim \meas{Q}$ for all $Q\in\calQ$.
Let
\[
g(x) =
\begin{cases}
\meas{Q}^{-1} \int_{Q} f, & x\in Q\in\calQ,\\
f(x), & x\not\in\cup \calQ
\end{cases}
\]
and
\[
b = \sum_{Q\in\calQ} b^{Q},
\quad
b^{Q}(x) =
\begin{cases}
f(x) - \meas{Q}^{-1} \int_{Q} f, & x\in Q,\\
0, & x\not\in Q.
\end{cases}
\]
Then $\norm{g}_{1} \leq \norm{f}_{1}$ and $\norm{g}_{\infty} \lesssim 1$, so we get the required weak type bound for $g$ from the strong type $(2,2)$ bound.

With $E := \cup_{Q\in\calQ} C Q$, where $C$ is sufficiently large in terms of the constants in the definition of $S_{k}$, it suffices to show
\begin{equation}
\label{eq:outside-exceptional}
\meas{\Set{ S b > 1 } \setminus E}
\lesssim
\sum_{Q} \meas{Q}.
\end{equation}
To this end it suffices to show
\begin{equation}
\label{eq:outside-exceptional-Lr}
\int_{X \setminus E} \sum_{k} (S_{k}b)^{2}
\lesssim
\sum_{Q} \meas{Q}.
\end{equation}

Let $(D-\eta)/2<\alpha<D/2$.
For $x\not\in E$ only cubes of scale $\leq k$ contribute to $S_{k}$.
Thus by Lemma~\ref{lem:reverse-holder-single-scale} we have
\[
(S_{k} b)^{2}(x)
\lesssim
(\kappa^{k})^{-2D} \sum_{Q : \dist(x,Q)\lesssim \kappa^{k}} (\kappa^{k}/\diam(Q))^{2\alpha} \norm{b^{Q}}_{1}^{2}.
\]
Hence the left-hand side of \eqref{eq:outside-exceptional-Lr} can be estimated by
\begin{align*}
\MoveEqLeft
\int_{X \setminus E} \sum_{k} (\kappa^{k})^{-2D} \sum_{Q} \one_{\dist(x,Q)\lesssim \kappa^{k}} (\kappa^{k}/\diam(Q))^{2\alpha} \norm{b^{Q}}_{1}^{2} \dif x
\\
&\lesssim
\sum_{Q} \sum_{k\geq \scale(Q)} (\kappa^{k})^{-2D} (\kappa^{k}/\diam(Q))^{2\alpha} \meas{Q}^{2} \int_{X \setminus E}  \one_{\dist(x,Q)\lesssim \kappa^{k}} \dif x\\
&\lesssim
\sum_{Q} \diam(Q)^{-2\alpha} \meas{Q}^{2} \sum_{k\geq \scale(Q)} \kappa^{k(-D+2\alpha)}\\
&\lesssim
\sum_{Q} \meas{Q}^{2} \diam(Q)^{-D}\\
&\lesssim
\sum_{Q} \meas{Q}.
\qedhere
\end{align*}
\end{proof}

\section{Averages}
The estimate for the jump counting function in Theorem~\ref{thm:av} will follow from Lemma~\ref{lem:sparse-domination} and the next result.
\begin{proposition}
\label{prop:av-jump}
For every $f\in L^{1}(X) \cap L^{\infty}(X)$, every $\lambda>0$ and $k_{\max}<\infty$ there exists a subadditive function $F_{\lambda}$ on $\calDp$ such that
\[
\lambda \sqrt{\jump{A_{t}f(x)}{\kappa^{\scale(Q')}\leq t \leq \kappa^{\scale(Q)}}}
\leq
F_{\lambda}(Q',Q),
\quad
x\in Q'\subseteq Q,
\scale(Q) \leq k_{\max},
\]
and
\[
\mu\Set{\Nop_{Q} F_{\lambda} > \nu}
\leq
C \nu^{-1} \norm{f}_{L^{1}(CQ)}
\quad\text{for all}\quad Q\in\calD, \nu>0.
\]
\end{proposition}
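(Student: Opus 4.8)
The plan is to dominate $\lambda\sqrt{\jump{A_{t}f(x)}{\kappa^{\scale(Q')}\le t\le\kappa^{\scale(Q)}}}$ pointwise by the sum of a \emph{lacunary} part --- the $\sim\lambda$-jump counting function of the dyadic martingale $k\mapsto\EE_{k}f$ along the scales between $Q'$ and $Q$ --- and a \emph{short} part --- the short variation square function of Section~\ref{sec:short} restricted to those scales --- to take $F_{\lambda}$ to be a fixed multiple of this sum, and then to prove the local weak-type $(1,1)$ bound for $\Nop_{Q}F_{\lambda}$ by treating the two parts separately. For the pointwise step, fix $x\in Q'\subseteq Q$ with $\scale(Q)\le k_{\max}$ and split $[\kappa^{\scale(Q')},\kappa^{\scale(Q)}]$ at the lacunary points $\kappa^{k}$, $\scale(Q')\le k\le\scale(Q)$. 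By the standard splitting of the jump counting function (see \cite{MR2434308}), every $\lambda$-jump of $t\mapsto A_{t}f(x)$ either occurs inside one block $[\kappa^{k}/C,C\kappa^{k}]$, where it is controlled by $\hV[2]{A_{t}f(x)}{\kappa^{k}/C\le t\le C\kappa^{k}}$, or crosses a lacunary point and thereby produces a $c\lambda$-jump of $k\mapsto A_{\kappa^{k}}f(x)$; since \eqref{eq:short-var-av} dominates both $\hV[2]{A_{t}f(x)}{\kappa^{k}/C\le t\le C\kappa^{k}}$ and $\abs{A_{\kappa^{k}}f(x)-\EE_{k}f(x)}$ by $S_{k}f(x)$, one obtains
\[
\lambda\sqrt{\jump{A_{t}f(x)}{\kappa^{\scale(Q')}\le t\le\kappa^{\scale(Q)}}}
\lesssim
\lambda\sqrt{\jump[c\lambda]{\EE_{k}f(x)}{\scale(Q')\le k\le\scale(Q)}}
+\Big(\sum_{\scale(Q')\le k\le\scale(Q)}S_{k}f(x)^{2}\Big)^{1/2}.
\]

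For $k\ge\scale(Q')$ the martingale $\EE_{k}f$ is constant on $Q'$; replacing $S_{k}$ by a dyadically anchored variant --- the same non-tangential supremum, but taken over a fixed dilate of the cube of $\calD_{k}$ containing the evaluation point, which is therefore constant on cubes of scale $k$ and is dominated by $S_{k}$ with an enlarged non-tangential constant (hence still weak type $(1,1)$ by Theorem~\ref{thm:short}) --- one sees that the right-hand side above is, up to a constant, a function of $(Q',Q)$ alone, and I take $F_{\lambda}(Q',Q)$ to be the square root of $C_{0}\big(\lambda^{2}\jump[c\lambda]{\EE_{k}f|_{Q'}}{\scale(Q')\le k\le\scale(Q)}+\sum_{\scale(Q')\le k\le\scale(Q)}(S_{k}f|_{Q'})^{2}\big)$ when $\scale(Q)\le k_{\max}$, and $0$ otherwise. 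Both summands are monotone in the sense of Lemma~\ref{lem:sparse-domination}, since shrinking $Q'$ or enlarging $Q$ only enlarges the scale interval while leaving the values at the common scales unchanged; the short summand is a sum of nonnegative terms over this interval and is therefore exactly $\ell^{2}$-subadditive, and the lacunary summand is $\ell^{2}$-subadditive up to the single ``straddling'' jump created when a scale interval is split, which is absorbed by the short summand (alternatively one uses the Jones--Seeger--Wright reorganisation of the long-jump count into a manifest sum of squares). The weak-type bound for the short summand is then immediate: for $x\in Q$ only scales $k\le\scale(Q)$ occur, and at those scales $S_{k}f$ only involves the values of $f$ within $C\kappa^{k}\lesssim\diam Q$ of $Q$, i.e.\ within $CQ$, so $\Nop_{Q}$ of the short summand is $\lesssim S(f\one_{CQ})$ and Theorem~\ref{thm:short} gives $\mu\{\Nop_{Q}(\text{short summand})>\nu\}\lesssim\nu^{-1}\norm{f}_{L^{1}(CQ)}$. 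This is where the small boundary property enters, through the reverse H\"older inequality Lemma~\ref{lem:reverse-holder-single-scale}.

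The remaining estimate --- and the step where I expect the real work to be --- is the local weak $(1,1)$ bound for the lacunary summand, i.e.\ that $\mu\{x\in Q:\lambda\sqrt{\jump[c\lambda]{\EE_{k}f(x)}{k\le\scale(Q)}}>\nu\}\lesssim\nu^{-1}\norm{f}_{L^{1}(CQ)}$ (taking the supremum over $Q'\ni x$ has extended the scale range down to $-\infty$). For $x\in Q$ and $k\le\scale(Q)$ the value $\EE_{k}f(x)$ depends only on $f\one_{Q}$, so I may assume $f$ supported in $Q$ and perform the Calder\'on--Zygmund decomposition of $f$ at height $\nu$: $f=g+\sum_{P}b^{P}$ with $\norm{g}_{\infty}\lesssim\nu$, $\norm{g}_{1}\le\norm{f}_{1}$, $\sum_{P}\mu(P)\le\nu^{-1}\norm{f}_{1}$, and each $b^{P}$ supported on a dyadic cube $P$ with $\int b^{P}=0$. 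The crucial point --- and the reason the jump counting function, which is \emph{not} sublinear, can be handled by this classical decomposition --- is that $\EE_{k}b^{P}=0$ for $k\ge\scale(P)$ while $\EE_{k}b^{P}$ is supported on $P$ for $k<\scale(P)$, so that $\EE_{k}\big(\sum_{P}b^{P}\big)$ vanishes identically off $\bigcup_{P}P$; hence on $X\setminus\bigcup_{P}P$ one has the \emph{exact} identity $\EE_{k}f=\EE_{k}g$, and there the jump counting function of $\EE_{\cdot}f$ coincides with that of $\EE_{\cdot}g$. On $\bigcup_{P}P$ one uses $\mu(\bigcup_{P}P)\le\nu^{-1}\norm{f}_{1}$; off it, Chebyshev together with the strong type $(2,2)$ bound $\norm{\lambda\sqrt{\jump[c\lambda]{\EE_{k}g}{k\in\Z}}}_{2}\lesssim\norm{g}_{2}$ for the dyadic martingale jump function (uniform in $\lambda$) gives the bound $\nu^{-2}\norm{g}_{2}^{2}\lesssim\nu^{-2}\norm{g}_{\infty}\norm{g}_{1}\lesssim\nu^{-1}\norm{f}_{1}$. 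The strong $(2,2)$ estimate itself follows from an optional stopping argument: with $\tau_{i+1}$ the first scale $k$, taken in the martingale direction, at which $\abs{\EE_{k}g-\EE_{\tau_{i}}g}>c\lambda$, the sampled sequence $(\EE_{\tau_{i}}g)_{i}$ is a martingale, one has the pointwise inequality $(c\lambda)^{2}\jump[c\lambda]{\EE_{k}g}{k\in\Z}\le\sum_{i}\abs{\EE_{\tau_{i}}g-\EE_{\tau_{i-1}}g}^{2}$, and integrating and invoking the orthogonality of the martingale increments $\EE_{\tau_{i}}g-\EE_{\tau_{i-1}}g$ bounds the right-hand side by $\norm{g}_{2}^{2}$. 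So the main obstacles are, first, arranging $F_{\lambda}$ to be genuinely $\ell^{2}$-subadditive and, second, this weak-type bound for the martingale jump, which hinges on the exact vanishing of the martingale of the bad part off the exceptional cubes; by contrast, the short-variation estimate is a black-box application of Theorem~\ref{thm:short}.
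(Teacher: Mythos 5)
Your short/lacunary decomposition and the pointwise bound $\lambda\sqrt{\jump{A_{t}f}{\cdot}}\lesssim\lambda\sqrt{\jump[c\lambda]{\EE_{k}f}{\cdot}}+\big(\sum_{k}S_{k}f^{2}\big)^{1/2}$ match the paper, and your local weak $(1,1)$ argument for the lacunary piece --- Calder\'on--Zygmund decomposition of $f\one_{Q}$ at height $\nu$, the exact identity $\EE_{k}f=\EE_{k}g$ off $\bigcup_{P}P$ because $\EE_{k}b^{P}\equiv 0$ off $P$, and the strong $(2,2)$ bound for the good part via optional sampling --- is a correct and arguably more self-contained route than the paper's, which reduces $\Nop F'_{\lambda}$ to a square function of dyadic maximal functions of stopped increments and then invokes ``standard CZ theory.''

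The gap is in the construction of the \emph{subadditive} majorant $F_{\lambda}$, which is the whole point of the proposition. You take $F_{\lambda}(Q',Q)^{2}$ to be a multiple of $\lambda^{2}\jump[c\lambda]{\EE_{k}f|_{Q'}}{\scale(Q')\le k\le\scale(Q)}$ plus the short sum and assert that the $\ell^{2}$-subadditivity defect of the jump count --- the single straddling jump that appears when $[\scale(Q''),\scale(Q)]$ is cut at $\scale(Q')$ --- is ``absorbed by the short summand.'' It is not: a straddling jump $\abs{\EE_{a}f-\EE_{b}f}>c\lambda$ with $a<\scale(Q')<b$ is a long-range martingale discrepancy and bears no relation to $S_{\scale(Q')}f$, which only controls $\abs{A_{t}f-\EE_{\scale(Q')}f}$ for $t\sim\kappa^{\scale(Q')}$; one can make the former large while the latter is negligible. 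Splitting $\abs{\EE_{a}f-\EE_{b}f}\le\abs{\EE_{a}f-\EE_{\scale(Q')}f}+\abs{\EE_{\scale(Q')}f-\EE_{b}f}$ only records a $c\lambda/2$-jump in a subinterval, not a $c\lambda$-jump, and the halving cascades under repeated splitting. Your parenthetical alternative (reorganizing the jump count into a manifest sum of squares) is the right idea, but done naively --- greedy stopping times built anew for each interval $[\scale(Q'),\scale(Q)]$ --- it still fails subadditivity, because the stopping times, and hence the sampled increments, change when the interval is split. The device in the paper's proof, which your proposal does not reach, is to build the greedy stopping times $l_{j}(x)$ \emph{once}, anchored at $l_{0}\equiv k_{\max}$, and to set
\[
F'_{\lambda}(Q',Q)
=\Big(\sum_{j}\abs{(\EE_{l_{j+1}}-\EE_{l_{j}})(\EE_{\scale(Q')}-\EE_{\scale(Q)})f}^{2}\Big)^{1/2},
\]
i.e.\ the stopped square function applied to the \emph{windowed} function $(\EE_{\scale(Q')}-\EE_{\scale(Q)})f$. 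Because the $l_{j}$ do not depend on $(Q',Q)$, subadditivity in $(Q',Q)$ is now just Minkowski's inequality in $\ell^{2}$ applied to the telescoping identity $\EE_{\scale(Q'')}-\EE_{\scale(Q)}=(\EE_{\scale(Q'')}-\EE_{\scale(Q')})+(\EE_{\scale(Q')}-\EE_{\scale(Q)})$, with no error term at all. Without this windowing step Lemma~\ref{lem:sparse-domination} cannot be applied.
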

\begin{proof}
Fix $\lambda>0$.
Note first that, as in \cite[Lemma 1.3]{MR2434308}, we have
\begin{align*}
\lambda \sqrt{\jump{A_{t}f(x)}{\kappa^{\scale(Q')}\leq t \leq \kappa^{\scale(Q)}}}
&\lesssim
\Big( \sum_{k=\scale(Q')}^{\scale(Q)-1} \sup_{x'\in Q_{k}(x)} S_{k}f(x')^{2} \Big)^{1/2}\\
&+
\lambda \sqrt{\jump[\lambda/2]{\EE_{k}f(x)}{\scale(Q') \leq k \leq \scale(Q)}}.
\end{align*}
The first term on the right-hand side is a subadditive function $F$ of the order interval $[Q',Q]$, and its contribution is estimated by Theorem~\ref{thm:short}.

In order to estimate the second term we follow the proof of L\'epingle's inequality for martingales.
We construct the greedy stopping times with jump size $\lambda/8$ starting with $l_{0}(x) \equiv k_{\max}$: given $l_{j}(x)$ let $l_{j+1}(x)<l_{j}(x)$ be the largest number with $\abs{(\EE_{l_{j+1}}-\EE_{l_{j}})f(x)} > \lambda/8$ (or $-\infty$ if no such number exists).
Then
\[
\lambda \sqrt{\jump[\lambda/2]{\EE_{k}f(x)}{k_{0} \leq k \leq k_{1}}}
\lesssim
\Big(\sum_{j} \abs{(\EE_{t_{j+1}(x)}-\EE_{t_{j}(x)})(\EE_{k_{0}}-\EE_{k_{1}})f(x)}^{2}\Big)^{1/2}.
\]
Let $F_{\lambda}'(Q',Q)$ denote the right-hand side when $x\in Q'\subset Q$, $\scale(Q')=k_{0}$, and $\scale(Q)=k_{1}$.
Then $F_{\lambda}'(Q',Q)$ is clearly a subadditive function of the order interval $[Q',Q]$.
Moreover,
\[
\Nop F
\lesssim
\Big(\sum_{j} \abs{M_{\calD} (\EE_{t_{j+1}}-\EE_{t_{j}})f}^{2}\Big)^{1/2},
\]
where $M_{\calD}$ is the dyadic maximal operator.
Hence the operator $f\mapsto \Nop F_{\lambda}'$ has strong type $(2,2)$ by orthogonality of the operators $\EE_{t_{j+1}}-\EE_{t_{j}}$ and the martingale maximal inequality.
Standard CZ theory can be applied to show that it also has weak type $(1,1)$ because the filtration $\calD$ is doubling.
We obtain the claim with $F_{\lambda}=F+F_{\lambda}'$.
\end{proof}

In order to pass to $r$-variations we need the fact that the $p$-th power of the $L^{p,\infty}$ quasimetric is equivalent to a metric for $0<p<1$.
\begin{lemma}[{cf.\ \cite[Lemma 2.3]{MR0241685}}]
\label{lem:weakLp<1}
Let $0<p<1$.
Then
\[
\norm{\sum_{j} g_{j}}_{p,\infty}^{p}
\leq 2^{p} \Big(1+\frac{1}{1-p}\Big)
\sum_{j} \norm{g_{j}}_{p,\infty}^{p}.
\]
\end{lemma}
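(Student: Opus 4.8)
The plan is to reproduce the classical fact that for $0<p<1$ the functional $g\mapsto\norm{g}_{p,\infty}^{p}$ is subadditive up to the multiplicative constant $2^{p}(1+\frac{1}{1-p})$; this is essentially \cite[Lemma 2.3]{MR0241685}. First I would make two harmless reductions. All quantities involved behave monotonically under passing to partial sums of $\sum_{j}\abs{g_{j}}$, so Fatou's lemma reduces the statement to the case of a finite sum, and no convergence issue arises. By homogeneity in the family $(g_{j})_{j}$ I may then normalize $\sum_{j}\norm{g_{j}}_{p,\infty}^{p}=1$, and it suffices to prove, for every $\lambda>0$, the distributional estimate
\[
\mu\Set{ \abs{\textstyle\sum_{j} g_{j}} > 2\lambda }
\leq
\Big(1+\frac{1}{1-p}\Big) \lambda^{-p},
\]
since multiplying by $(2\lambda)^{p}$ and taking the supremum over $\lambda>0$ then yields the claim, the factor $2^{p}$ accounting for the level $2\lambda$.

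The main step is a truncation at height $\lambda$. For fixed $\lambda>0$ I would split $g_{j}=u_{j}+v_{j}$ with $u_{j}=g_{j}\one_{\Set{\abs{g_{j}}\leq\lambda}}$ and $v_{j}=g_{j}\one_{\Set{\abs{g_{j}}>\lambda}}$, so that $\Set{\abs{\sum_{j} g_{j}}>2\lambda}$ is contained in the union of $\Set{\abs{\sum_{j} u_{j}}>\lambda}$ and $\Set{\abs{\sum_{j} v_{j}}>\lambda}$. The tail part is immediate: the support of $\sum_{j} v_{j}$ lies in $\bigcup_{j}\Set{\abs{g_{j}}>\lambda}$, which by subadditivity of $\mu$ and the definition of the weak $L^{p}$ quasinorm has measure at most $\sum_{j}\mu\Set{\abs{g_{j}}>\lambda}\leq\lambda^{-p}\sum_{j}\norm{g_{j}}_{p,\infty}^{p}=\lambda^{-p}$.

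For the bounded parts the relevant point is that $\sum_{j} u_{j}$ need not belong to $L^{\infty}$, since there may be many summands, so I would estimate it in $L^{1}$ instead. The layer cake formula together with the weak-type bound gives $\norm{u_{j}}_{1}\leq\int_{0}^{\lambda}\mu\Set{\abs{g_{j}}>s}\dif s\leq\norm{g_{j}}_{p,\infty}^{p}\int_{0}^{\lambda}s^{-p}\dif s=\frac{1}{1-p}\norm{g_{j}}_{p,\infty}^{p}\lambda^{1-p}$; the finiteness of $\int_{0}^{\lambda}s^{-p}\dif s$ is exactly where the hypothesis $p<1$ enters and where the constant $\frac{1}{1-p}$ originates. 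Summing over $j$ and using the normalization gives $\norm{\sum_{j} u_{j}}_{1}\leq\frac{1}{1-p}\lambda^{1-p}$, and Chebyshev's inequality turns this into $\mu\Set{\abs{\sum_{j} u_{j}}>\lambda}\leq\frac{1}{1-p}\lambda^{-p}$. Adding the two contributions yields the displayed estimate.

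The argument is short and elementary, so I do not anticipate a genuine obstacle; the only points requiring a little care are that $\sum_{j} g_{j}$ must be given a meaning (handled by the reduction to finite sums) and that the truncated-from-above part has to be treated in $L^{1}$ rather than $L^{\infty}$ (handled by the layer cake estimate above).
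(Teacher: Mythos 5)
Your proof is correct and rests on the same underlying idea as the paper's: truncate each $g_j$ at a height proportional to $\lambda$, bound the tail contribution by summing the measures of the supports, and bound the low part via a layer-cake $L^{1}$ estimate combined with Chebyshev, with $p<1$ entering precisely through the convergence of $\int_{0}^{\lambda}s^{-p}\,\dif s$. The only real difference is the decomposition: you use the simple two-way split $g_{j}=u_{j}+v_{j}$ at height $\lambda$, whereas the paper uses a three-way split $g_{j}=l_{j}+m_{j}+u_{j}$ at height $\lambda/2$, where $l_{j}=\one_{\{g_{j}\leq\lambda/2\}}\min(g_{j},c_{j}\lambda/2)$ is a further $c_{j}$-dependent truncation whose sum satisfies $\sum_{j}l_{j}\leq\lambda/2$ pointwise (so it contributes nothing to the superlevel set), and $m_{j}$ is then handled by the same layer-cake argument with the lower limit $c_{j}\lambda/2$. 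Since the paper then bounds $\int_{c_{j}\lambda/2}^{\lambda/2}t^{-p}\,\dif t\leq\int_{0}^{\lambda/2}t^{-p}\,\dif t$ anyway, the extra carved-out piece $l_{j}$ does not improve the final constant, and both arguments arrive at exactly $2^{p}\bigl(1+\frac{1}{1-p}\bigr)$. Your version is slightly cleaner for the lemma as stated; the paper's finer split is closer to the cited source and would matter only if one wanted to keep the lower cutoff in the layer-cake integral (e.g.\ to extract a logarithmic bound near $p=1$), which is not needed here. Your preliminary reductions (to finitely many nonnegative $g_{j}$ by monotonicity) are fine, though what you invoke is really monotone convergence rather than Fatou.
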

\begin{proof}
It suffices to consider $g_{j}\geq 0$.
Moreover, by homogeneity we may assume
\[
\sum_{j} c_{j} = 1,
\quad
c_{j} = \norm{g_{j}}_{p,\infty}^{p}.
\]
With this normalization we have to show
\[
\meas{\Set{\sum_{j} g_{j} > \lambda}}
\leq 2^{p} \Big(1+\frac{1}{1-p}\Big)
\lambda^{-p}.
\]
To this end decompose $g_{j} = l_{j} + m_{j} + u_{j}$, where $u_{j} = g_{j} \one_{g_{j} > \lambda/2}$ and $l_{j} = \one_{g_{j} \leq \lambda/2} \min(g_{j}, c_{j}\lambda/2)$.
Then
\[
\sum_{j} \meas{\supp u_{j}}
=
\sum_{j} \meas{\Set{g_{j} > \lambda/2}}
\leq
\sum_{j} c_{j} (\lambda/2)^{-p}
=
(\lambda/2)^{-p}
\]
and
\[
\sum_{j} l_{j}
\leq
\sum_{j} c_{j} \lambda/2
\leq
\lambda/2.
\]
Hence it remains to estimate
\begin{multline*}
\meas{\Set{\sum_{j} m_{j} > \lambda/2}}
\leq
(\lambda/2)^{-1} \int \sum_{j} m_{j}
\leq
(\lambda/2)^{-1} \sum_{j} \int (g_{j}-c_{j}\lambda/2) \one_{c_{j}\lambda/2 \leq g_{j} \leq \lambda/2}\\
=
(\lambda/2)^{-1} \sum_{j} \int_{t=c_{j}\lambda/2}^{\lambda/2} \meas{\Set{g_{j}>t}} \dif t
\leq
(\lambda/2)^{-1} \sum_{j} \int_{t=c_{j}\lambda/2}^{\lambda/2} c_{j} t^{-p} \dif t\\
\leq
(1-p)^{-1} (\lambda/2)^{-1} \sum_{j} c_{j} (\lambda/2)^{1-p}
\leq
\frac{2^{p}}{1-p} \lambda^{-p}.
\qedhere
\end{multline*}
\end{proof}

\begin{corollary}
\label{cor:av-Vr}
There exists $C<\infty$ such that for all $r>2$ we have
\[
\mu\Set{\Nop_{Q} F > \nu}
\leq
C \frac{r}{r-2} \nu^{-1} \norm{f}_{L^{1}(CQ)}
\quad\text{for all}\quad Q\in\calD,
\]
where
\[
F(Q',Q) = \sup_{x'\in Q'} \hV{A_{t}f(x')}{\kappa^{\scale(Q')} \leq t \leq \kappa^{\scale(Q)}}.
\]
\end{corollary}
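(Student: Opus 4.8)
The plan is to deduce the bound from Proposition~\ref{prop:av-jump} and the standard comparison of $r$-variation with jump counting functions, cutting the resulting dyadic sum at the scale $\nu$ and adding the pieces by means of Lemma~\ref{lem:weakLp<1}; the constant $\frac{r}{r-2}$ will emerge as the value of a geometric series. We may assume $2<r\le 3$, the case $r\ge 3$ being a consequence of the monotonicity of $\hV{\cdot}{\cdot}$ in $r$.

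First recall that for any ordered $I$, any $(a_t)_{t\in I}$, and any $r>1$ one has $\hV{a_t}{t\in I}^r\lesssim\sum_{n\in\Z}2^{nr}\jump[2^n]{a_t}{t\in I}$, with the implied constant uniform for $r\le 3$ (group the increments of an almost optimal sequence by dyadic size; cf.\ \cite{MR2434308}). Fix $\nu>0$, pick $n_0\in\Z$ with $2^{n_0}\sim\nu$, and put $I=(\kappa^{\scale(Q')},\kappa^{\scale(Q)})$. Splitting the sum above at $n_0$, and estimating the terms with $n\le n_0$ through the inequality $2^n\sqrt{\jump[2^n]{A_tf(x')}{t\in I}}\le F_{2^n}(Q',Q)$ supplied by Proposition~\ref{prop:av-jump} --- whose right-hand side is independent of $x'\in Q'$ --- gives
\[
\hV{A_tf(x')}{t\in I}^r
\lesssim
\sum_{n\le n_0}2^{n(r-2)}F_{2^n}(Q',Q)^2
+\sum_{n>n_0}2^{nr}\jump[2^n]{A_tf(x')}{t\in I}.
\]
The first sum does not depend on $x'$; the second is nonzero for some $x'\in Q'$ only if $A_tf(x')$ has a jump larger than $\nu/2$ there, which forces $F_{\nu/2}(Q',Q)\gtrsim\nu$. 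Since $\Nop_Q$ is subadditive on sums of nonnegative functions and commutes with multiplication by constants and with squaring, taking the supremum over $x'\in Q'$ and then $\Nop_Q$ yields, for a.e.\ $x\in Q$,
\[
\{\Nop_Q F>\nu\}
\subseteq
\bigl\{\,\textstyle\sum_{n\le n_0}2^{n(r-2)}(\Nop_Q F_{2^n})^2\gtrsim\nu^r\bigr\}
\cup\{\Nop_Q F_{\nu/2}\gtrsim\nu\}.
\]

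By Proposition~\ref{prop:av-jump} the second set has measure $\lesssim\nu^{-1}\norm{f}_{L^1(CQ)}$. For the first, set $g_n:=2^{n(r-2)}(\Nop_Q F_{2^n})^2$; then $\norm{g_n}_{1/2,\infty}=2^{n(r-2)}\norm{\Nop_Q F_{2^n}}_{1,\infty}^2\lesssim 2^{n(r-2)}\norm{f}_{L^1(CQ)}^2$, again by Proposition~\ref{prop:av-jump}. Applying Lemma~\ref{lem:weakLp<1} with $p=1/2$ and summing the geometric series $\sum_{n\le n_0}2^{n(r-2)/2}\lesssim\frac{1}{1-2^{-(r-2)/2}}\nu^{(r-2)/2}\lesssim\frac{r}{r-2}\nu^{(r-2)/2}$, we obtain $\norm{\sum_{n\le n_0}g_n}_{1/2,\infty}^{1/2}\lesssim\frac{r}{r-2}\nu^{(r-2)/2}\norm{f}_{L^1(CQ)}$, so the first set has measure $\lesssim\nu^{-r/2}\cdot\frac{r}{r-2}\nu^{(r-2)/2}\norm{f}_{L^1(CQ)}=\frac{r}{r-2}\nu^{-1}\norm{f}_{L^1(CQ)}$. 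Adding the two contributions finishes the proof. I expect the main obstacle to be arranging both truncations correctly: the sum $\sum_n 2^{n(r-2)}F_{2^n}^2$ must be cut at the top, where the weights grow, so that the large-jump remainder is absorbed into the single weak-type bound for $F_{\nu/2}$, while at the bottom the geometric tail contributes the factor $\sim\frac{r}{r-2}$. It is essential to sum via the $L^{p,\infty}$-quasinorm subadditivity of Lemma~\ref{lem:weakLp<1} rather than by a union bound (which would cost an additional $(r-2)^{-1/2}$), and the exponent $p=1/2$ is forced because $F_{2^n}$ controls $\sqrt{\jump[2^n]{\cdot}{\cdot}}$ and not $\jump[2^n]{\cdot}{\cdot}$ itself.
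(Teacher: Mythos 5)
Your proof is correct and follows essentially the same route as the paper: both reduce the $r$-variation to a dyadically weighted sum of jump-counting functions, cut it at jump size $\sim\nu$, absorb the large-jump contribution into a single weak-type bound for $F_{\nu/2}$ (the paper phrases this as the alternative ``$F_{\nu}(Q',Q)>\nu$''), and sum the remaining terms via Lemma~\ref{lem:weakLp<1} with $p=1/2$, the geometric series producing the factor $r/(r-2)$. Your explicit reduction to $2<r\leq 3$ is a sensible way to keep the constant in the variation-versus-jump comparison uniform in $r$, a point the paper leaves implicit.
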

Corollary~\ref{cor:av-Vr} and Lemma~\ref{lem:sparse-domination} imply the estimate for $r$-variations in Theorem~\ref{thm:av}.
\begin{proof}
By homogeneity we may normalize $\nu=1$.
Let $F_{\lambda}$ be the functions from Proposition~\ref{prop:av-jump}.

Suppose $F(Q,Q')>1$.
Then either $F_{1}(Q,Q')>1$ or all jumps involved in the definition of $r$-variation in $F(Q,Q')$ are bounded by $1$.
In the latter case
\[
1
<
F(Q,Q')^{r}
\lesssim
\sum_{l\geq 0} 2^{-lr} \mathcal{J}_{2^{-l}},
\]
so that in either case we obtain
\[
\sum_{l\geq 0} 2^{-l(r-2)} F_{2^{-l}}(Q',Q)^{2} \gtrsim 1.
\]
Hence, using Lemma~\ref{lem:weakLp<1}, we obtain
\begin{align*}
\mu\Set{\Nop_{Q} F > 1}
&\leq
\mu\Set{\Nop_{Q} \big( \sum_{l\geq 0} 2^{-l(r-2)} F_{2^{-l}}^{2} \big) \gtrsim 1}\\
&\lesssim
\norm{ \sum_{l\geq 0} 2^{-l(r-2)} (\Nop_{Q} F_{2^{-l}})^{2} }_{1/2,\infty}^{1/2}\\
&\lesssim
\sum_{l\geq 0} \norm{ 2^{-l(r-2)} (\Nop_{Q} F_{2^{-l}})^{2} }_{1/2,\infty}^{1/2}\\
&=
\sum_{l\geq 0} 2^{-l(r-2)/2} \norm{ \Nop_{Q} F_{2^{-l}} }_{1,\infty}\\
&\lesssim
\norm{f}_{L^{1}(CQ)} \sum_{l\geq 0} 2^{-l(r-2)/2}\\
&\lesssim
\frac{r}{r-2} \norm{f}_{L^{1}(CQ)}.
\qedhere
\end{align*}
\end{proof}

\section{Singular integrals}
\label{sec:singular}
In this section we prove Theorem~\ref{thm:sing}.
Let $\psi$ be a smooth function supported on the interval $[1/\kappa,\kappa]$ with $\sum_{k} \psi(\kappa^{-k}x) = 1$ for all $x\in (0,\infty)$.
Let
\[
K_{k}(x,y) = K(x,y) \psi(\kappa^{-k}\rho(x,y))
\]
be the smoothly truncated dyadic pieces of $K$ of scale $k$ and let $T_{k}$ be the corresponding integral operators
\[
T_{k}f(x) = \int K_{k}(x,y) f(y) \dif y.
\]
This coincides with the notation for truncated singular integrals \eqref{eq:Tt}, but there should be no confusion between integer parameters $k$ and positive real parameters $t$.
The H\"older continuity hypothesis~\eqref{eq:rho-holder} implies that $K_{k}$ satisfies the same smoothness condition \eqref{eq:smoothness} as $K$ (up to a constant factor).
Standard calculations using the cancellation and smoothness conditions show that
\begin{equation}
\label{eq:Tk-alm-orth}
\norm{T_{k'}^{*} T_{k}}_{L^{2}(X)\to L^{2}(X)}+
\norm{T_{k'} T_{k}^{*}}_{L^{2}(X)\to L^{2}(X)}
\lesssim
2^{-\epsilon \abs{k-k'}}.
\end{equation}
Hence by the Cotlar--Stein lemma \cite[p.~280]{MR1232192} the operator $T=\sum_{k\in\Z}T_{k}$ is bounded on $L^{2}(X)$.

\subsection{Short variations}
Fix $r>2$ and define
\[
F(Q',Q)
:=
\sup_{x\in Q'} \hV{\int_{\rho(x,y)>t}K(x,y)f(y) \dif y}{\kappa^{\scale(Q')} \leq t \leq \kappa^{\scale(Q)}}.
\]
The function $F$ is clearly monotonic and subadditive in the order interval $[Q',Q]$ and sublinear in $f$.
Moreover, $F(Q',Q)$ depends only on the values of $f$ on $CQ$.
Hence, in view of Lemma~\ref{lem:sparse-domination}, Theorem~\ref{thm:sing} will follow if we show that the map $f\mapsto\Nop F$ has weak type $(1,1)$ globally.

We split
\begin{align}
\int_{r<\rho(x,y)} K(x,y) f(y) \dif y
&= \label{eq:si-long}
\sum_{k = k_{0}(r)}^{\infty} \int K_{k}(x,y) f(y) \dif y\\
&- \label{eq:si-error}
\int_{\kappa^{k_{0}(r)-1} < \rho(x,y) < \kappa^{k_{0}(r)}} K_{k_{0}(r)}(x,y) f(y) \dif y\\
&+ \label{eq:si-short}
\int_{r < \rho(x,y) < \kappa^{k_{0}(r)}} K(x,y) f(y) \dif y,
\end{align}
where $k_{0}(r) = \ceil{\log_{\kappa} r}$.
The contributions of \eqref{eq:si-error} and \eqref{eq:si-short} are controlled by the square function \eqref{eq:short-var-sing}, so it remains to estimate the contribution of \eqref{eq:si-long}.

\subsection{Strong type $(2,2)$}
Let
\[
F'(Q',Q) := \sup_{x\in Q'} \hV{\sum_{k\geq k_{0}} T_{k}f(x)}{\scale(Q') \leq k_{0} \leq \scale(Q)}.
\]
First we show that the operator $f\mapsto\Nop F'$ has strong type $(2,2)$.
Following \cite[p.~548]{MR837527} we decompose
\[
\sum_{k\geq k_{0}}T_{k}
=
\sum_{l\in\Z, k\geq k_{0}}\DD_{l} T_{k}
=
\sum_{k\in\Z, l\geq k_{0}}\DD_{l} T_{k}
- \sum_{l\geq k_{0}, k<k_{0}}\DD_{l} T_{k}
+ \sum_{l<k_{0}, k\geq k_{0}}\DD_{l} T_{k}
=: I + II + III.
\]
The contribution of $I$ to $\Nop F'(x)$ is bounded by
\[
\hV{\sum_{l\geq k_{0}}\DD_{l}Tf(x)}{k_{0}\in\Z}.
\]
Notice that the supremum over $x\in Q'$ disappeared because functions in the image of $\DD_{l}$ are constant on dyadic cubes of scale $\leq l$.
This operator is bounded on $L^{2}(X)$ with norm $\lesssim r/(r-2)$ since $T$ is bounded on $L^{2}(X)$ and by L\'epingle's inequality for martingales in the form \cite[Lemma 3.3]{MR1019960}.

The remaining two terms will be estimated by square functions.

\subsubsection*{Estimate for $II$}
As in the estimate for $I$ we remove the supremum over $x\in Q'$.
Then we estimate the $r$-variation by the $\ell^{2}$ norm.
Using Minkowski's inequality for the sum over $m$ and Cauchy--Schwarz inequality for the sum over $k$ we estimate
\begin{align*}
\Big( \sum_{k_{0}\in\Z} \abs{\sum_{l\geq k_{0}, k<k_{0}}\DD_{l} T_{k} f}^{2} \Big)^{1/2}
&=
\Big( \sum_{k_{0}\in\Z} \abs{\sum_{m\geq 1} \sum_{k\in\Z} \one_{k<k_{0}\leq k+m} \DD_{k+m} T_{k} f}^{2} \Big)^{1/2}\\
&\leq
\sum_{m\geq 1} \Big( \sum_{k_{0}\in\Z} m \sum_{k\in\Z} \abs{\one_{k<k_{0}\leq k+m} \DD_{k+m} T_{k} f}^{2} \Big)^{1/2}\\
&=
\sum_{m\geq 1} m \Big( \sum_{k\in\Z} \abs{\DD_{k+m} T_{k} f}^{2} \Big)^{1/2}.
\end{align*}
Hence it suffices to find an estimate for the square function on the right-hand side that decays sufficiently quickly with $m$.
By orthogonality
\[
\norm{ \Big( \sum_{k\in\Z} \abs{\DD_{k+m} T_{k} f}^{2} \Big)^{1/2}}_{2}
=
\norm{\sum_{k\in\Z} \DD_{k+m} T_{k} f}_{2}.
\]
For each $k$ we have
\[
\norm{\DD_{k+m} T_{k}}_{L^{2}(X)\to L^{2}(X)}
=
\norm{T_{k}^{*}\DD_{k+m}}_{L^{2}(X)\to L^{2}(X)}
\lesssim
2^{-\epsilon m}
\]
since $T_{k}^{*}\one_{Q}$ is bounded by a universal constant and supported on $\partial_{C\kappa^{k}}Q$ for every $Q\in\calD_{k+m}$ in view of the cancellation condition.

Also, for all $k\neq k'$ we have
\[
(\DD_{k+m} T_{k})^{*} \DD_{k'+m} T_{k'} = 0
\]
and
\[
\norm{\DD_{k'+m} T_{k'} (\DD_{k+m} T_{k})^{*}}_{L^{2}(X)\to L^{2}(X)}
\leq
\norm{T_{k'} T_{k}^{*}}_{L^{2}(X)\to L^{2}(X)}
\lesssim
2^{-\epsilon \abs{k-k'}}
\]
by \eqref{eq:Tk-alm-orth}.
By the Cotlar--Stein lemma \cite[p.~280]{MR1232192} it follows that $\norm{\sum_{k\in\Z} \DD_{k+m} T_{k}}_{L^{2}(X)\to L^{2}(X)} \lesssim 2^{-\epsilon m}$.

\subsubsection*{Estimate for $III$}
This estimate is similar to $II$, but this time we have to keep the supremum over $x'\in Q'$:
\begin{align*}
\MoveEqLeft
\Big( \sum_{k_{0}\in\Z} \sum_{Q\in\calD_{k_{0}}} \one_{Q}(x) \sup_{x'\in Q} \abs{\sum_{l< k_{0} \leq k}\DD_{l} T_{k} f(x')}^{2} \Big)^{1/2}\\
&\leq
\Big( \sum_{k_{0}\in\Z} \sup_{\rho(x,x')\leq C\kappa^{k_{0}}} \abs{\sum_{m\geq 1} \sum_{k\in\Z} \one_{k-m<k_{0}\leq k} \DD_{k-m} T_{k} f(x')}^{2} \Big)^{1/2}\\
&\leq
\Big( \sum_{k_{0}\in\Z} \Big( \sum_{m\geq 1} \sum_{k\in\Z} \one_{k-m<k_{0}\leq k} \sup_{\rho(x,x')\leq C\kappa^{k}} \abs{\DD_{k-m} T_{k} f(x')} \Big)^{2} \Big)^{1/2}\\
&\leq
\sum_{m\geq 1} \Big( \sum_{k_{0}\in\Z} m \sum_{k\in\Z} \Big( \one_{k-m<k_{0}\leq k} \sup_{\rho(x,x')\leq C\kappa^{k}} \abs{\DD_{k-m} T_{k} f(x')} \Big)^{2} \Big)^{1/2}\\
&=
\sum_{m\geq 1} m \Big( \sum_{k\in\Z} \sup_{\rho(x,x')\leq C\kappa^{k}} \abs{\DD_{k-m} T_{k} f(x')}^{2} \Big)^{1/2}
\end{align*}
We view the square sum on the right-hand side as an intrinsic square function.
Indeed, the condition \eqref{eq:rho-holder} implies that $K_{k}$ also has H\"older type regularity.
Notice that $\DD_{k-m}g(x') = \int g h_{k-m,x'}$, where $h_{k-m,x'}$ is a function with mean $0$ and bounded $L^{1}$ norm supported in $B(x,C\kappa^{k-m})$.
It follows that for each $x'$ the function $y\mapsto \DD_{k-m}K_{k}(x',y)$, where $\DD_{k-m}$ acts in the first variable, has absolute value $O(\kappa^{-k D}\kappa^{-\epsilon m})$.
Moreover, it is H\"older continuous since it is an average of H\"older continuous functions.
The required decay in $m$ now comes from the estimate for the intrinsic square function, \cite[Theorem 1.3]{arXiv:1605.02936}, with $\Phi=\kappa^{-\epsilon m}$.

\subsection{Weak type $(1,1)$}
By homogeneity it suffices to show
\[
\meas{\Set{\Nop F' > 1}}
\lesssim
\norm{f}_{1}.
\]
We make a CZ decomposition $f=g+b$.
The good part $g$ is controlled by the $L^{2}$ estimate.
Let $Q\in\calD$ be a bad cube, $b_{Q}$ the corresponding bad function, and $\tilde Q \supset Q$ a ball containing $Q$ with a much larger, but still comparable, radius.
We estimate
\begin{align*}
\int_{X\setminus \tilde Q} \Nop F'(b_{Q})(x) \dif x
&\lesssim
\int_{X\setminus \tilde Q} \sum_{k>\scale(Q)} \sup_{\rho(x,x')\lesssim \kappa^{k}} \abs{T_{k} b_{Q}(x')} \dif x\\&\lesssim
\sum_{k>\scale(Q)} \kappa^{k D} \norm{T_{k} b_{Q}}_{\infty}\\
&\lesssim
\sum_{k>\scale(Q)} \kappa^{-\epsilon(k-\scale(Q))} \norm{b_{Q}}_{1}\\
&\lesssim
\norm{b_{Q}}_{1},
\end{align*}
where we have used that $T_{k}b_{Q}$ is supported in a ball or radius $O(\kappa^{k})$, the mean zero property of $b_{Q}$, and H\"older continuity of $K_{k}$.
Summing over all bad cubes we obtain the claim.

\begin{remark}
The above proof also yields $L^{p}$ and weak type estimates for the jump counting function $\jump{T_{t} f(x)}{t>0}$, and even its localized non-tangentional maximal version.
However, the jump counting function is not subadditive, and unlike in the case of averages we have been unable to construct a subadditive majorant that still satisfies the localized non-tangentional weak type $(1,1)$ estimate.
\end{remark}

\printbibliography
\end{document}